\numberwithin{equation}{section}
\newcounter{bbb}
\numberwithin{bbb}{section}
\newtheorem{theorem}[bbb]{Theorem}
\newtheorem{lemma}[bbb]{Lemma}
\newtheorem{proposition}[bbb]{Proposition}
\newtheorem{remark}[bbb]{Remark}
\newcommand{\R}{\mathbb R}
\newcommand{\ba}{\mathbf a}
\newcommand{\baa}{\mathbf A}
\newcommand{\bE}{\mathbf E}
\newcommand{\bV}{\mathbf V}
\newcommand{\bK}{\mathbf K}
\DeclareMathOperator*{\rad}{{\bf r}}
\DeclareMathOperator*{\rea}{Re}
\DeclareMathOperator*{\ima}{Im}
\title[Damped NLS]{Existence and nonexistence of global solutions  for time-dependent damped NLS equations}
\author[ M. Hamouda and M. Majdoub]{Makram Hamouda and Mohamed Majdoub}
\address[M. Hamouda]{Department of Basic Sciences, Deanship of Preparatory Year and Supporting Studies, Imam Abdulrahman Bin Faisal University, P. O. Box 1982, Dammam, Saudi Arabia.}
\email{\sl mmhamouda@iau.edu.sa }
\address[M. Majdoub]{Department of Mathematics, College of Science, Imam Abdulrahman Bin Faisal University, P. O. Box 1982, Dammam, Saudi Arabia \newline \& \newline Basic and Applied Scientific Research Center, Imam Abdulrahman Bin Faisal University, P.O. Box 1982, 31441, Dammam, Saudi Arabia}
\email{\sl mmajdoub@iau.edu.sa}
\email{\sl med.majdoub@gmail.com}
\subjclass[2020]{35Q55, 35A01, 35B30, 35B44.}
\keywords{Damped NLS equation, loss dissipation, global existence, blow-up.}
\begin{document}
\maketitle

\begin{abstract}
We investigate the Cauchy problem for the nonlinear Schr\"odinger equation with a time-dependent linear damping term. Under non standard assumptions on the loss dissipation, we prove the blow-up in the inter-critical regime, and the global existence in the energy subcritical case. Our results generalize and improve the ones in \cite{VDD, FZS14, OhTo}.
\end{abstract}


\section{Introduction and main results}
	\label{S1}

We deal with the initial value problem for the linearly time-dependent damped nonlinear  Schr\"odinger (NLS)
equation
\begin{equation}
\label{main}
{\rm i}\partial_t u+\Delta u +{\rm i} {\mathbf a}(t) u=\mu \,|u|^{p-1}u,\quad (t,x)\in (0,\infty)\times\R^N,
\end{equation}
with initial data $u(0,x) = u_0(x)$, where $p>1,\, \mu=\pm1$, and ${\mathbf a}: [0,\infty)\to [0,\infty)$ is a continuous function which represents the strength of loss dissipation. \\

Equation \eqref{main} can be seen as a model of dilute Bose-Einstein condensate (BEC) when the two-body interactions of the condensate are considered.  Also, it has been considered to study the laser guiding in an axially nonuniform plasma channel. One of the features of considering NLS with time-dependent coefficients is to control the nonlinear dynamics of the solution; this constitutes a main target in the understanding of the concept of optical soliton managements technology \cite{Malomed}. In the context of BEC, the generalized NLS equation with time-dependent coefficients reads as follows:
\begin{equation}
\label{GNLS}
{\rm i}\partial_t u+f(t)\Delta u +V(t,x)u+{\rm i} {\mathbf a}(t) u=g(t)|u|^{p-1}u,
\end{equation}
where $f(t)$, $V(t,x)$ and $g(t)$ represent the tunable effective mass, the trapping potential strengths and the Feshbach resonance management, respectively. The damping term $\ba(t)$ describes the loss ($\ba(t) \ge 0$) or the gain ($\ba(t) \le 0$) dissipation. For further readings about the physical context of \eqref{GNLS}, see e.g. \cite{CPKP06} and for a more general setting of the problem see the book \cite{Malomed}.\\

Before getting into the time-dependent damping context for \eqref{main}, we start by recalling  what has been done in the constant damping case. Note that the case ${\ba}=0$ in \eqref{main} corresponds to the standard NLS equation. It is worth to mention that there has been a large  amount of researches on the  NLS equation, and the monographs \cite{Cazenave, Tao} cover a very extensive overview on the most established results on the subject. Some interesting avenues in this direction have been  investigated for the $2D$ energy critical NLS equation \cite{CIMM}.  Furthermore, the case of inhomogeneous nonlinearities $\pm |x|^b\,|u|^{p-1}u\, (b\in\R)$ was  extensively studied, see e.g.  \cite{AT, DMS} and the references therein.\\

For the constant damping case, that is $\ba(t)=\ba> 0$ in \eqref{main}, many works have been devoted to the study of local/global well-posedness, blow-up and asymptotic behavior. 

For $\mu=1$ ({defocusing regime}), by using \eqref{M-Id} and \eqref{E-Id} below together with the standard blow-up alternative, one can easily check that the maximal $H^1-$solution is global provided that  $1<p<1+\frac{4}{N-2}$ ( $p<\infty$ if $N=1,2$); see \cite{Cazenave}. 

For $\mu=-1$ ({focusing regime}), by using \eqref{M-Id} below and the $L^2$ blow-up alternative \cite{Cazenave}, one can see that the maximal solution is global  if $1<p<1+\frac{4}{N}$. See also Proposition \ref{prop-appendixB} for the proof in the time-dependent case. Moreover,  it is shown in \cite{In} that the solution scatters. However, the scattering fails to hold for $\ba=0$ and $1<p<1+\frac{4}{N}$. For $1+\frac{4}{N}\leq p<1+\frac{4}{N-2}$  and $u_0\in H^1(\R^N)$, Ohta-Todorova \cite{OhTo} proved that the corresponding maximal solution is global for $\ba$ sufficiently large. Making use of Merle-Rapha\"el techniques \cite{Merle1}, the stability of finite time blow-up dynamics with the log-log blow-up speed is proven in \cite{Dar} for $p=1+\frac{4}{N}$.

The inter-critical regime, that is $1+\frac{4}{N}<p<1+\frac{4}{N-2}$ ($1+\frac{4}{N}<p<\infty$ if $N=1,2$), was first investigated by Tsutsumi in \cite{Tsut1}. It was shown that if $u_0\in \Sigma(\R^N):=H^1(\R^N)\cap L^2(\R^N, |x|^2dx)$ satisfies some suitable conditions, then the corresponding maximal solution to \eqref{main} blows up in finite time. This result was improved by Ohta and Todorova in \cite{OhTo} by allowing the energy of $u_0$ to be positive. In addition, Ohta-Todorova \cite{OhTo} consider some invariant sets for which the stemming solutions are global, see also \cite{CZW}.

The asymptotic behavior of global solutions to \eqref{main} with constant damping was recently investigated in \cite{VDD, In} for both focusing and defocusing regimes.\\

Now, we turn back to the Cauchy problem with  time-dependent damping $\ba(t)$. In this case, the local well-posedness in $H^1(\R^N)$ for  $1< p<1+\frac{4}{N-2}$ ($1< p<\infty$ if $N=1,2$)  is a straightforward consequence of  classical NLS arguments \cite{Cazenave, Kato}, see also Appendix \ref{appendix2} below.

We are mainly interested in global existence and blow-up of solutions to \eqref{main} with  initial data in $H^1(\R^N)$.  To the best of our knowledge,  there are a few works dealing with the global versus blow-up of the linearly time-dependent damped NLS equation. In \cite{FZS14}, the blow-up for \eqref{main}, in the inter-critical regime, is proven in $\Sigma(\R^N)$ for a non-decreasing damping by assuming the  smallness of  both $\|\ba\|_{L^{\infty}}$ and $\|\ba'\|_{L^{\infty}}$. Moreover, the global existence is shown in \cite{FZS14}  under some conditions on the loss/gain damping term $\ba(t)$.

In the present article, we generalize and somehow improve  earlier existing works \cite{VDD, FZS14, OhTo, Tsut1} regarding the constant and the time-dependent damping cases. One of the novelties here is the non standard assumptions on the time-dependent damping term which cover the hypotheses in earlier works \cite{FZS14, OhTo}.

Let us finally mention that some other interesting directions are  investigated by many authors; see e.g. \cite{Antonelli2} for the nonlinear damping and \cite{Dar2, Tarek} for the fractional dissipation, and the references therein.

Unless otherwise specified, along  the rest of this article, we will only consider the focusing case, that is $\mu =-1$.

\subsection*{Main results}
\label{SS1}

Before we state our main results, we first introduce the following notations:
\begin{gather}
\label{A}
\baa(t)=\int_0^t\,\ba(s)ds,\\
\label{ais}
\underline{\ba}=\inf_{t>0}\,\bigg(\frac{\baa(t)}{t}\bigg), \quad \overline{\ba}=\sup_{t>0}\,\bigg(\frac{\baa(t)}{t}\bigg).
\end{gather}
One can easily verify that if $\ba \in L^q(0, \infty)$ for some $1\le q\le \infty$, then  $\overline{\ba}<\infty$. However, the opposite is not in general true, see Lemma \ref{Explicit-Examp} in  Appendix \ref{appendix1}.

Let us denote the subspace of $H^1(\R^N)$ consisting of radial functions by
	\[
	H^1_{\rad}(\R^N) := \left\{f \in H^1(\R^N) \ : \ f(x)= f(|x|)\right\}.
	\]
Denote also by 
$$\Sigma(\R^N):=H^1(\R^N)\cap L^2(\R^N;|x|^2 dx).$$
The following blow-up result can be seen as an improvement of \cite[Theorem 1.4]{FZS14}.
\begin{theorem}
\label{Blow0}
Let $N\geq 2$,  $1+\frac{4}{N}< p<1+\frac{4}{N-2}$ ($p<\infty$ if $N=2$), and $u_0\in \Sigma(\R^N)$. Suppose that $\bE(u_0)<0$, $\bV(u_0)<0$, and $\ba'(t)\geq 0$. Then for 
\begin{equation}
\label{a-star}
\overline{\ba}<\ba_*:=\frac{-2\bV(u_0)}{(\kappa+1)\bK(u_0)},
\end{equation}
we have 
\begin{equation}
\label{Lspan}
    T_{\ba}^*(u_0)\leq \frac{-1}{2(\kappa+1)\overline{\ba}}\ln\left(1+\frac{(\kappa+1)
\overline{\ba}\bK(u_0)}{2\bV(u_0)}\right)<\infty,
\end{equation}
 where 
\begin{equation}
\label{kappa}
\kappa=\frac{N+2-(N-2)p}{N(p-1)-4},
\end{equation}
and $\bE, \bK, \bV$ are respectively given by \eqref{Ener}, \eqref{Ku} and \eqref{Vu} below. 
\end{theorem}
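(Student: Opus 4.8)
The plan is to run a virial/convexity argument adapted to the loss term. Since $u_0\in\Sigma(\R^N)$, the local theory propagates finiteness of the weight, so I may work with $\int_{\R^N}|x|^2|u|^2\,dx$. To neutralize the damping cleanly I first pass to the gauge variable $w:=e^{\baa(t)}u$; a direct computation shows $w$ solves the standard focusing NLS $i\partial_t w+\Delta w=-g(t)|w|^{p-1}w$ with decaying coefficient $g(t)=e^{-(p-1)\baa(t)}$, and, crucially, that \eqref{M-Id} becomes mass conservation $\|w(t)\|_{L^2}^2=\|u_0\|_{L^2}^2$. Writing $I(t):=\int|x|^2|w|^2\,dx=e^{2\baa(t)}\int|x|^2|u|^2\,dx$, I have $I'(t)=4\,\mathrm{Im}\!\int\bar w\,(x\cdot\nabla w)\,dx$ and
\[
I''(t)=-2\big(N(p-1)-4\big)\,\|\nabla w\|_{L^2}^2+4N(p-1)\,E_w(t),
\]
where $E_w$ is the time-dependent energy of $w$; at $t=0$ these reduce to $\bK(u_0)$ and $4\bV(u_0)$ with the normalizations of \eqref{Ku}--\eqref{Vu}.

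The novelty, and the main obstacle, is that $E_w$ is \emph{not} conserved. From \eqref{E-Id} and $g'=-(p-1)\ba\,g$ one gets the balance
\[
E_w'(t)=\tfrac{p-1}{2}\,\ba(t)\,\|\nabla w\|_{L^2}^2-(p-1)\,\ba(t)\,E_w(t)\ge 0,
\]
so the energy is nondecreasing and may climb from $\bE(u_0)<0$ toward $0$, which would destroy the classical sign $I''\le 0$; this is precisely why a threshold on the damping is forced. Here I will use $\ba'\ge0$: together with $\overline{\ba}<\infty$ it yields the pointwise bound $0\le\ba(t)\le\overline{\ba}$ for all $t$ (a nondecreasing function is dominated by its supremum, which equals $\overline{\ba}$). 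This is the only place the monotonicity enters, and it lets me replace $\ba(t)$ by the constant $\overline{\ba}$.

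The heart of the argument is then to combine the virial identity with the energy balance to produce the single second-order inequality
\[
I''(t)+2(\kappa+1)\,\overline{\ba}\,I'(t)\le 0,
\]
with $\kappa$ as in \eqref{kappa}; the algebraic identity $\kappa+1=\frac{2(p-1)}{N(p-1)-4}$ is what makes the coefficients match. I expect this to be the delicate step: because $E_w$, $\|\nabla w\|_{L^2}^2$ and $\mathrm{Im}\!\int\bar w\,x\cdot\nabla w$ are coupled through the two balances, the inequality is not pointwise-algebraic and must be obtained by feeding the energy law back into $I''$ and invoking $\ba\le\overline{\ba}$ (a continuity/bootstrap on the sign of $I'$ may be needed to keep the estimates closed on the maximal interval), with $\bE(u_0)<0$ supplying the correct initial sign.

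Granting the inequality, I compare $I$ with the solution $\Lambda(t)=\bK(u_0)+\frac{2\bV(u_0)}{(\kappa+1)\overline{\ba}}\big(1-e^{-2(\kappa+1)\overline{\ba}\,t}\big)$ of the corresponding equality, which satisfies $\Lambda(0)=\bK(u_0)=I(0)$ and $\Lambda'(0)=4\bV(u_0)=I'(0)$. Setting $D=I-\Lambda$ gives $\big(e^{2(\kappa+1)\overline{\ba}\,t}D'\big)'\le0$ with $D(0)=D'(0)=0$, hence $D\le0$, i.e. $0\le I(t)\le\Lambda(t)$. Since $\bV(u_0)<0$, condition \eqref{a-star} is exactly the requirement $\Lambda(+\infty)=\bK(u_0)+\frac{2\bV(u_0)}{(\kappa+1)\overline{\ba}}<0$, so $\Lambda$ vanishes at the finite time on the right of \eqref{Lspan}. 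A global $H^1$-solution would keep $I$ continuous and nonnegative, but $I\le\Lambda\to0$ forces $\|\nabla w\|_{L^2}\to\infty$ through $\|u_0\|_{L^2}^2=\|w\|_{L^2}^2\le\frac{2}{N}\,I^{1/2}\|\nabla w\|_{L^2}$ and the conserved mass; hence the solution cannot reach that time, yielding \eqref{Lspan} for $T_{\ba}^*(u_0)$. As a sanity check, letting $\overline{\ba}\to0$ recovers the classical Glassey bound $I(0)/|I'(0)|$.
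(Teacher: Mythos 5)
Your scaffolding is sound and, in its endgame, matches the paper: the gauge change $w={\rm e}^{\baa}u$, the bound $\ba(t)\le\overline{\ba}$ for non-decreasing $\ba$ (Lemma \ref{L-a}), and the comparison of $I$ with $\Lambda$ are exactly how the paper converts its integrated inequality \eqref{FZS-In} into \eqref{a-star}--\eqref{Lspan} (divide \eqref{FZS-In} by $g(t)={\rm e}^{2(1+\kappa)\baa(t)}$ and use $\baa(s)-\baa(t)\ge -\overline{\ba}(t-s)$). The problem is that what you call ``the heart of the argument''--- the inequality $I''+2(\kappa+1)\overline{\ba}\,I'\le 0$ --- is the entire mathematical content of the theorem, and you never establish it. The paper does not establish it from scratch either: it imports the corresponding (integrated, weighted) estimate from inequalities (4.12)--(4.13) of \cite{FZS14}, which is precisely where the delicate coupled analysis lives.

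Moreover, your proposed route to that inequality cannot close as described. Since $I(t)={\rm e}^{2\baa(t)}\bK(u(t))$, identities \eqref{K-Id}--\eqref{V-Id} give $I'=4{\rm e}^{2\baa}\bV(u(t))$ and $I''=8{\rm e}^{2\baa}\bP(u(t))$, so your inequality is equivalent to the pointwise bound $\bP(u(t))+(\kappa+1)\overline{\ba}\,\bV(u(t))\le 0$ on the maximal interval. Writing $\bP=\frac{N(p-1)}{4}\bE-\frac{N(p-1)-4}{4}\|\nabla u\|_{L^2}^2$, the obstruction is the energy: by \eqref{E-Id}, $\bE'=-2\ba\bI\ge 0$ whenever $\bE<0$, so $\bE$ increases and can cross zero, after which $\bP$ has no sign. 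A bootstrap ``on the sign of $I'$'' controls only $\bV$, and the sign of $\bV$ gives no control whatsoever on $\bE$ or $\bP$; note also that $(\bK,\bV,\bP,\bE)$ is not a closed differential system, since $\bP'$ involves $\frac{d}{dt}\|\nabla u\|_{L^2}^2$, which produces a new term without a sign. Worse, the inequality you aim at is \emph{false} under your stated hypotheses alone: for a large constant damping $\ba\equiv a$, Theorem \ref{GE} yields a global solution even with $\bE(u_0)<0$ and $\bV(u_0)<0$; its nonlinear term decays faster than its kinetic term, so $\bP(u(t))>0$ for large $t$, and then \eqref{V-Id} drives $\bV(u(t))$ positive, giving $\bP+(\kappa+1)a\bV>0$ eventually. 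Consequently, any proof of your differential inequality must itself invoke the smallness $\overline{\ba}<\ba_*$ through a quantitative bootstrap, whereas your outline brings in \eqref{a-star} only at the very end, to ensure $\Lambda(+\infty)<0$. Supplying that bootstrap --- equivalently, re-deriving (4.12)--(4.13) of \cite{FZS14} --- is the missing step, and it is the hard one.
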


In view of the blow-up conclusions stated in the above theorem, some comments arise, we enumerate them in what follows.

\begin{itemize}
    \item[($i$)] Compared to \cite[Theorem 1.4]{FZS14}, one of the novelties in Theorem \ref{Blow0} consists in obtaining the blow-up without the smallness assumption on the derivative of $\ba(t)$ which seems unusual. Furthermore, we explicitly give an upper bound for the blow-up region, namely $[0, \ba_*)$ as in \eqref{a-star}, and an estimate of the lifespan as in \eqref{Lspan}.
    \item[($ii$)] It is worth mentioning that Theorem \ref{Blow0} covers the constant damping case, and somehow improves \cite[Theorem 1.2]{OhTo} in the sense that we give upper bounds for $\ba_*$  and $T_{\ba}^*(u_0)$. In contrast with the assumptions in \cite[Theorem 1.2]{OhTo}, the hypotheses in Theorem \ref{Blow0} are stronger with regard to the blow-up target. Nevertheless, our aim in Theorem \ref{Blow0} is to derive  estimates for $\ba_*$ and $T_{\ba}^*(u_0)$.
    \item[($iii$)]Clearly, one can easily see that in general $\overline{\ba}\leq \|\ba\|_{L^\infty}$. However, due to Lemma \ref{L-a}, we have the equality for a non-decreasing function $\ba$, namely $\overline{\ba}=\|\ba\|_{L^\infty}$.
    \item[($iv$)] The approach used to prove the blow-up result in Theorem \ref{Blow0} applies only in the inter-critical regime, that is $1+\frac{4}{N}< p<1+\frac{4}{N-2}$. This makes the constant $\kappa$ (given by \eqref{kappa}) finite and positive. The same aforementioned restriction on $p$ appears in \cite{FZS14, OhTo}.
    
   \item[($v$)] With the approach used in \cite{FZS14}, it is not clear how to remove the assumption $\ba'(t)\geq 0$ in Theorem \ref{Blow0}. The aforementioned hypothesis on the damping function $\ba(t)$ seems not natural. For example, the damping $t\mapsto (1+t)^{-\theta},~\theta>0$ is not covered by \cite[Theorem 1.4]{FZS14}. Note that this type of damping is extensively considered in the blow-up problems related to the nonlinear wave equation, see e.g. \cite{HH-MJM2022} for the scale-invariant case and \cite{LST-2020} for an overview on the other different cases (overdamping, effective, scale-invariant and scattering).
    \item[($vi$)] In order to get rid from the monotony argument for the damping function $\ba(t)$, we follow another approach which consists in rewriting equation \eqref{main} in terms of $v(t,x):={\rm e}^{\baa(t)}\,u(t,x)$, see e.g. \cite{VDD}. 
\end{itemize}

Thanks to Proposition \ref{Rela} and using some localized virial estimates \cite{VDD}, we obtain the following  blow-up result for the inter-critical regime.
\begin{theorem}
\label{Blow1}
Let {$N\geq 2$}  and $p \ge 1+\frac{4}{N}$ such that
$$\left\{\begin{array}{ll} p \le 1+\frac{4}{N-2} \quad &\text{if} \ \ N \ge 3,\\
p < \infty  \quad &\text{if} \ \ N=2.
\end{array}\right.$$
Assume that $u_0\in \Sigma(\R^N)$ satisfies one of the following conditions
\begin{itemize}
    \item[(I)] ${\mathbf E}(u_0)<0$,
    \item[(II)] ${\mathbf E}(u_0)=0$ and ${\mathbf V}(u_0)<0$,
    \item[(III)] ${\mathbf E}(u_0)>0$ and ${\mathbf V}(u_0)+\sqrt{{\mathbf E}(u_0){\mathbf K}(u_0)}<0$,
\end{itemize}
where $\bE, \bK, \bV$ are respectively given by \eqref{Ener}, \eqref{Ku} and \eqref{Vu} below.  Then, there exists $\ba_*>0$ such that for all  $\overline{\ba}\in [0,\ba_*)$, the corresponding maximal solution to \eqref{main} blows up in finite time.
\end{theorem}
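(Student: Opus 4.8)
The plan is to remove the damping via the gauge change $v(t,x):={\rm e}^{\baa(t)}u(t,x)$ anticipated in remark $(vi)$ above. A direct substitution turns \eqref{main} with $\mu=-1$ into the undamped but nonautonomous focusing equation
\begin{equation*}
{\rm i}\partial_t v+\Delta v=-{\rm e}^{-(p-1)\baa(t)}\,|v|^{p-1}v.
\end{equation*}
Since $\baa$ does not depend on $x$ we have $\nabla u={\rm e}^{-\baa(t)}\nabla v$, so on every finite interval (where ${\rm e}^{\baa(t)}$ stays bounded) the $H^1$ norms of $u$ and $v$ blow up simultaneously; Proposition \ref{Rela} makes this equivalence precise and records how $\bE$, $\bK$ and $\bV$ are transported. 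It therefore suffices to prove that $v$ blows up in finite time.

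First I would write the virial identity for $f(t):=\int_{\R^N}|x|^2|v(t,x)|^2\,dx$. With $f'(t)=4\,\ima\int_{\R^N}\bar v\,(x\cdot\nabla v)\,dx$, a computation gives
\begin{equation*}
f''(t)=4N(p-1)\,E_v(t)-\big(2N(p-1)-8\big)\,\|\nabla v(t)\|_{L^2}^2,
\end{equation*}
where $E_v(t):=\frac12\|\nabla v\|_{L^2}^2-\frac{{\rm e}^{-(p-1)\baa(t)}}{p+1}\|v\|_{L^{p+1}}^{p+1}$ is the (now time-dependent) energy, with $E_v(0)=\bE(u_0)$. Since $p>1+\frac4N$ the coefficient $2N(p-1)-8$ is positive, so the kinetic term is favorable; the sole obstruction is that $E_v$ is not conserved: one finds $E_v'(t)=\frac{(p-1)\ba(t)}{p+1}\,{\rm e}^{-(p-1)\baa(t)}\|v\|_{L^{p+1}}^{p+1}\ge0$, so $E_v$ increases away from $\bE(u_0)$. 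The quantitative bound $0\le E_v(t)-\bE(u_0)=O(\overline{\ba})$ on the relevant interval, obtained by integrating this identity and invoking $\baa(t)\le\overline{\ba}\,t$, is exactly where the smallness of $\overline{\ba}$ is spent.

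The argument then reduces to the classical ODE trichotomy for undamped focusing NLS, perturbed by the $O(\overline{\ba})$ drift of the energy. In case (I) one has $f''(t)\le4N(p-1)E_v(t)$, and a choice of $\ba_*$ small enough to keep $E_v(t)\le\frac12\bE(u_0)<0$ until $f$ reaches $0$ forces $f$ to vanish in finite time, contradicting $f\ge0$ under the standing global-existence assumption. In cases (II) and (III) the sign of $f'(0)$, governed by $\bV(u_0)$, and---in (III)---the sharp balance $\bV(u_0)+\sqrt{\bE(u_0)\bK(u_0)}<0$ enter through the Cauchy--Schwarz inequality $(f'(t))^2\le16\,f(t)\,\|\nabla v(t)\|_{L^2}^2$, which converts the identity above into a closed convexity-type differential inequality for $f$; the localized virial estimates of \cite{VDD} furnish the analogous inequality when one prefers to dispense with the moment assumption. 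In each regime one picks $\ba_*>0$ so that the $\overline{\ba}$-dependent error cannot reverse the sign that drives $f$ to $0$.

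The hard part will be the monotone growth of $E_v$: in contrast with the autonomous case the energy is not conserved, and near a putative blow-up time $\|\nabla v\|_{L^2}$---and hence the growth rate of $E_v$---is large, so the Gronwall-type control of $E_v(t)-\bE(u_0)$ by $\overline{\ba}$ must be coupled to the decay of $f$ and balanced to yield a genuinely positive threshold $\ba_*$. The most delicate regime is (III), where $\bE(u_0)>0$ and only the sharp condition $\bV(u_0)+\sqrt{\bE(u_0)\bK(u_0)}<0$ keeps the convexity argument alive, leaving the least room to absorb the damping perturbation.
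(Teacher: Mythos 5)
Your overall strategy --- the gauge change $v={\rm e}^{\baa(t)}u$, the virial identity for $f(t)=\int_{\R^N}|x|^2|v(t,x)|^2dx$, and spending the smallness of $\overline{\ba}$ on the drift of the non-conserved energy via $\baa(t)\le\overline{\ba}\,t$ --- is exactly the paper's, and your two computations (the identity for $f''$ and for $E_v'$) are correct. The genuine gap is case (III). After discarding the gradient term, your inequality reads $f''(t)\le 4N(p-1)E_v(t)$, i.e.\ the energy enters with the coefficient $4N(p-1)\ge 16$; two integrations in time then produce the quadratic $N(p-1)\bE(u_0)\,t^2+4\bV(u_0)\,t+\bK(u_0)$ (up to the $O(\overline{\ba})$ error), whose negativity at some $t_0>0$ requires $4\bV(u_0)^2>N(p-1)\bE(u_0)\bK(u_0)$ --- strictly stronger than (III) as soon as $p>1+\frac{4}{N}$. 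You sense this, which is why you invoke the Cauchy--Schwarz bound $(f')^2\le 16 f\|\nabla v\|^2$ to retain the gradient term, but that step is precisely the crux of case (III) and you never carry it out; it is not at all clear that the resulting nonlinear ODE inequality, further perturbed by the damping drift, closes to give exactly the hypothesis $\bV(u_0)+\sqrt{\bE(u_0)\bK(u_0)}<0$. The paper sidesteps this with a different algebraic split: it writes
\begin{equation*}
f''(t)=8\,\mathbf{H}(v(t))+\frac{4\left(4-N(p-1)\right)}{p+1}\,{\rm e}^{(1-p)\baa(t)}\|v(t)\|_{L^{p+1}}^{p+1}\le 8\,\mathbf{H}(v(t)),
\end{equation*}
so the discarded term is the (non-positive, since $N(p-1)\ge 4$) $L^{p+1}$ term rather than the gradient term, and the energy keeps the coefficient $8$. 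Combining with \eqref{Hamil-v-id} and integrating twice gives $0\le f(t)\le 4\bE(u_0)t^2+4\bV(u_0)t+\bK(u_0)+\frac{16(p-1)}{p+1}Q(t)$, and conditions (I)--(III) are then \emph{exactly} the conditions under which the quadratic part is negative at some $t_0$; no case-by-case ODE or concavity analysis is needed.

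Second, the difficulty you flag at the end ("the Gronwall-type control of $E_v$ must be coupled to the decay of $f$") is real in your formulation --- in case (I) you require $E_v(t)\le\frac12\bE(u_0)$ \emph{until $f$ reaches zero}, a solution-dependent, hence $\ba$-dependent, time --- but it is bypassed by the paper's scheme: the time $t_0$ is fixed in advance from the quadratic $P(t)=4\bE(u_0)t^2+4\bV(u_0)t+\bK(u_0)$ alone, so it depends only on $u_0$, and the damping error is estimated once, at that single fixed time, by $Q(t_0)\le\frac{t_0^3}{6}\,\overline{\ba}\,C(t_0)$, where $C(t_0)$ is a Sobolev bound for $\sup_{0\le s\le t_0}\|v(s)\|_{H^1}^{p+1}$ (finite under the global-existence hypothesis being contradicted). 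The contradiction $f(t_0)<0$ then holds for all $\overline{\ba}$ below a threshold, with no coupling between the energy drift and the decay of $f$. Two further harmless slips: with your normalization $E_v(0)=\frac12\bE(u_0)$, not $\bE(u_0)$; and Proposition \ref{Rela} concerns the quantities attached to $u$ itself, it does not record how $\bE,\bK,\bV$ are transported to $v$ (that role is played by \eqref{mass-v}--\eqref{Hamil-v-id} and Lemma \ref{lem-viri-iden-x2}).
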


Now, if we only assume that $u_0\in H^1_{\rad}(\R^N)$ then the blow-up holds for small damping $\overline{\ba}$ as stated below.
\begin{theorem}
\label{Blow2}
Let {$N\geq 2$}  and $p \ge 1+\frac{4}{N}$ such that
$$\left\{\begin{array}{ll} p \le 1+\frac{4}{N-2} \quad &\text{if} \ \ N \ge 3,\\
p \leq 5 \quad &\text{if} \ \ N=2.
\end{array}\right.$$

If $u_0\in H^1_{\rad}(\R^N)$ satisfies  ${\mathbf E}(u_0)<0$ ($\bE$ is given by \eqref{Ener} below), then there exists $\ba_*>0$ such that for all $\overline{\ba}\in [0,\ba_*)$, the corresponding maximal solution to \eqref{main} blows up in finite time.
\end{theorem}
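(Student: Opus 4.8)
The plan is to remove the damping through the substitution indicated in item $(vi)$, namely $v(t,x):={\rm e}^{\baa(t)}u(t,x)$, and to produce finite-time blow-up for $v$. A direct computation (cf. Proposition \ref{Rela}) shows that $v$ solves the undamped, time-modulated equation
\[
{\rm i}\,\partial_t v+\Delta v=-{\rm e}^{-(p-1)\baa(t)}\,|v|^{p-1}v,\qquad v(0)=u_0,
\]
with conserved mass $\|v(t)\|_{L^2}=\|u_0\|_{L^2}$. Since $\baa(t)\le\overline{\ba}\,t<\infty$ on finite intervals, one has ${\rm e}^{-\baa(t)}>0$ there, and the identity $\|\nabla u(t)\|_{L^2}={\rm e}^{-\baa(t)}\|\nabla v(t)\|_{L^2}$ shows that $u$ blows up exactly when $v$ does; hence it suffices to treat $v$. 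I would then record the modified energy $\mathcal{E}(t):=\tfrac12\|\nabla v(t)\|_{L^2}^2-\tfrac{1}{p+1}{\rm e}^{-(p-1)\baa(t)}\|v(t)\|_{L^{p+1}}^{p+1}$, which satisfies $\mathcal{E}(0)=\bE(u_0)<0$ and $\mathcal{E}'(t)=\tfrac{p-1}{p+1}\ba(t){\rm e}^{-(p-1)\baa(t)}\|v(t)\|_{L^{p+1}}^{p+1}\ge0$; thus $\mathcal{E}$ is nondecreasing, and the whole difficulty is to keep it negative.

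Because $u_0$ is only radial (and not in $\Sigma(\R^N)$), the weight $|x|^2$ is unavailable, so I would use a radial \emph{localized} virial as in \cite{VDD}. Fix a smooth radial $\phi_R(x)=R^2\phi(x/R)$ with $\phi(r)=r^2$ for $r\le1$ and $\phi''\le2$, $\Delta\phi\le2N$ everywhere, and set $y(t):=\int_{\R^N}\phi_R\,|v(t)|^2\,dx\ge0$. Differentiating twice and using the convexity of $\phi$ (so that the kinetic part is bounded by $8\|\nabla v\|_{L^2}^2$ and the curvature factor $2N-\Delta\phi_R\ge0$ has the right sign), I expect
\[
y''(t)\le 8\|\nabla v\|_{L^2}^2-\frac{4N(p-1)}{p+1}\,{\rm e}^{-(p-1)\baa(t)}\|v\|_{L^{p+1}}^{p+1}+\mathcal{R}_R(t),
\]
where the remainder $\mathcal{R}_R$ is supported in $\{|x|\ge R\}$ and consists of a biharmonic term of size $O(R^{-2}\|u_0\|_{L^2}^2)$ together with the tail of the nonlinearity.

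The main obstacle is the control of $\mathcal{R}_R$, and this is precisely where $N\ge2$ and the restriction $p\le5$ (for $N=2$), $p\le1+\tfrac{4}{N-2}$ (for $N\ge3$) intervene. I would bound the nonlinear tail by the radial Strauss embedding $\||x|^{(N-1)/2}v\|_{L^\infty}\lesssim\|\nabla v\|_{L^2}^{1/2}\|v\|_{L^2}^{1/2}$, which gives $\int_{|x|\ge R}|v|^{p+1}\lesssim R^{-(N-1)(p-1)/2}\|\nabla v\|_{L^2}^{(p-1)/2}\|u_0\|_{L^2}^{(p+3)/2}$. Since $(p-1)/2\le2$ exactly in the stated range, Young's inequality absorbs this into $\eta\|\nabla v\|_{L^2}^2$ plus a constant that tends to $0$ as $R\to\infty$. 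Re-expressing the nonlinearity via $\mathcal{E}(t)$ and choosing $\eta$ small (legitimate because $2N(p-1)>8$ in the inter-critical regime, so the coefficient $8+\eta-2N(p-1)$ of $\|\nabla v\|_{L^2}^2$ is negative and may be discarded), I arrive at the clean inequality $y''(t)\le4N(p-1)\,\mathcal{E}(t)+\varepsilon(R)$ with $\varepsilon(R)\to0$.

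Finally I would close the argument as for Theorem \ref{Blow1}. Using Proposition \ref{Rela} one fixes $\ba_*>0$ so that, for every $\overline{\ba}\in[0,\ba_*)$, the nondecreasing energy stays below a fixed negative level, say $\mathcal{E}(t)\le\tfrac12\bE(u_0)<0$, throughout the maximal existence interval; this is the step that exploits the smallness of $\overline{\ba}$ and is carried out by a continuity argument anchored at $\mathcal{E}(0)=\bE(u_0)<0$. Taking $R$ so large that $\varepsilon(R)\le N(p-1)|\bE(u_0)|$, the inequality above becomes $y''(t)\le N(p-1)\bE(u_0)=:-c<0$. Then $0\le y(t)\le y(0)+y'(0)\,t-\tfrac{c}{2}t^2$ cannot hold for all $t$, so the solution must cease to exist in finite time, i.e. $v$, and therefore $u$, blows up in finite time.
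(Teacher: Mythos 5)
Your opening moves coincide with the paper's: the substitution $v={\rm e}^{\baa(t)}u$ (equation \eqref{main-bis}), the localized radial virial $V_{\varphi_R}$, and the control of the nonlinear tail by the radial Strauss estimate plus Young's inequality are precisely the content of \eqref{viri-est-inte}--\eqref{viri-est-consq}. But your absorption step — discarding the gradient term because ``$2N(p-1)>8$ in the inter-critical regime'' — silently excludes the mass-critical endpoint $p=1+\frac4N$, which \emph{is} part of the statement ($p\ge 1+\frac4N$). At that endpoint $2N(p-1)=8$ exactly, so after re-expressing the nonlinearity through $\mathcal{E}(t)$ the coefficient of $\|\nabla v(t)\|_{L^2}^2$ is $+\eta$, and no choice of $\eta$ closes the estimate: you are left with an uncontrolled $\eta\|\nabla v(t)\|_{L^2}^2$, which is expected to blow up. This is why the paper proves a separate refined virial estimate for $p=1+\frac4N$ (Lemma \ref{lem-viri-est-mass}, leading to \eqref{viri-est-mass-bis}), built on the carefully engineered cutoff \eqref{zeta} (the $(r-1)^5$ flattening), so that all dangerous gradient contributions appear inside the nonpositive term $-\int_{|x|>R}\bigl(4\varphi_{1,R}-\varepsilon\varphi_{2,R}^{N/2}\bigr)|\nabla v|^2\,dx$ rather than as $+\eta\|\nabla v\|_{L^2}^2$. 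Your proposal has no substitute for this, so the critical case is simply missing.

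The more serious gap is the closure. You claim that, for $\overline{\ba}$ small, the nondecreasing energy satisfies $\mathcal{E}(t)\le\tfrac12\bE(u_0)<0$ \emph{throughout the maximal existence interval}, ``by a continuity argument.'' That bootstrap cannot close: the hypothesis $\mathcal{E}(s)\le\tfrac12\bE(u_0)<0$ gives a \emph{lower} bound on ${\rm e}^{(1-p)\baa(s)}\|v(s)\|_{L^{p+1}}^{p+1}$ (namely $\ge\tfrac{p+1}{2}\|\nabla v(s)\|_{L^2}^2$), not an upper bound, hence no control at all on the increment $\mathcal{E}(t)-\mathcal{E}(0)=\tfrac{p-1}{p+1}\int_0^t\ba(s)\,{\rm e}^{(1-p)\baa(s)}\|v(s)\|_{L^{p+1}}^{p+1}\,ds$. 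Worse, negative energy combined with mass conservation and Gagliardo--Nirenberg forces $\|\nabla v(s)\|_{L^2}\ge c(\|u_0\|_{L^2})>0$ in the supercritical regime, so under your own bootstrap hypothesis $\mathcal{E}'(s)\gtrsim\ba(s)$ and the accumulated increase is $\gtrsim\baa(t)$, which is unbounded for \emph{every} damping with $\int_0^\infty\ba(s)\,ds=\infty$ — e.g.\ any constant damping $\ba\equiv\epsilon$, however small $\overline{\ba}=\epsilon$ is. Thus smallness of $\overline{\ba}$ can never yield a global-in-time negative-energy bound; the only way out would be that blow-up occurs first, which is exactly what you are trying to prove — the argument is circular. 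The paper never needs such a bound: it integrates the virial inequality twice \emph{keeping the damping contribution explicit} as the triple integral $Q(t)$ of \eqref{Q-t}, fixes a time $t_0$ and a radius $R_0$, determined by $u_0$ alone via Lemma \ref{Blow-R}, at which the undamped quadratic $a_{R_0}t_0^2+b_{R_0}t_0+c_{R_0}$ is negative, and only then invokes smallness of $\overline{\ba}$ to make $Q(t_0)\le\tfrac{t_0^3}{6}C(t_0)\overline{\ba}$ small, where $C(t_0)=\sup_{0\le s\le t_0}\|v(s)\|_{H^1}^{p+1}$ is finite on the compact interval $[0,t_0]$; this forces $0\le V_{\varphi_R}(t_0)<0$, a contradiction. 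In short, the damping must be handled on an a priori fixed finite time window, not through a global-in-time energy bound; your write-up can be repaired along those lines, but as stated the key step fails.
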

\begin{remark} \rm
   Note that, in the energy subcritical regime, the constant $\ba_*$ in Theorems \ref{Blow1}, \ref{Blow2} depends on the size of the initial data, that is $\ba_*=\ba_*(\|u_0\|_{H^1})$. However, in the energy critical regime, the constant $\ba_*$ depends on the profile of $u_0$.
\end{remark}
\begin{remark}
~{\rm \begin{itemize}
\item[($i$)] Theorem \ref{Blow2} improves \cite[Lemma 3.7]{VDD} in the sense  that we consider here a time-dependent damping function while a constant damping is studied in \cite{VDD}. 
    \item[($ii$)] Obviously, the hypothesis in Theorem \ref{Blow2} is independent of the cut-off function $\varphi_R$ given by \eqref{varphi-R} below. However, using Lemma \ref{Blow-R} with $a_R:= 2N(p-1){\mathbf E}(u_0)+o_R(1)$, $b_R:=2\ima\int_{\R^N} \overline{u_0}\left(\nabla \varphi_R \cdot \nabla u_0\right) dx $ and $c_R:=\int_{\R^N}\, \varphi_R(x)|u_0(x)|^2 dx$, one can obtain the same result as in Theorem \ref{Blow2} under one of the following assumptions:    
    \begin{itemize}
    \item[$\bullet$] ${\mathbf E}(u_0)=0$ and $\displaystyle{\sup_{R\gg 1}}\,(b_R)<0$\footnote{The condition $\displaystyle{\sup_{R\gg 1}}\,(b_R)<0$ means that there exist $R_0, \delta_0>0$ such that $b_R\leq -\delta_0$ for all $R\geq R_0$.};
    \item[$\bullet$] ${\mathbf E}(u_0)>0$ and $b_R+2 \sqrt{a_Rc_R}<0$ for $R$ large enough.
\end{itemize}
 \item[($iii$)] It is not clear how to remove the radial symmetry assumption in Theorem \ref{Blow2}. More precisely, can we have blowing-up solutions to \eqref{main} (even for the undamped case, that is $\ba(t)=0$) under the following conditions: 
 \begin{equation}
    \label{Neg-Conj}
    u_0\in H^1(\R^N),\quad \mathbf{E}(u_0)<0\quad\text{and}\quad 1+\frac{4}{N}\leq p< \left\{\begin{array}{ll} 1+\frac{4}{N-2} & \text{if} \ N \ge 3\\ \infty  & \text{if} \ N =1,2\end{array} \right. ?
 \end{equation}
To the best of our knowledge, the only positive answer to the above question  is given in \cite{OT-1D} for $N=1$, $p=5$ and $\ba(t)=0$. In higher dimensions, a partial answer is proved in \cite{Merle} for the mass-critical case and under some additional assumptions on the initial data.
 
\end{itemize}}
\end{remark}

{{ We now turn to the global existence for \eqref{main}  in the energy space $H^1(\R^N)$. For the $L^2-$subcritical regime, that is $p<1+\frac{4}{N}$, the global existence follows from  the local well-posedness in $H^1(\R^N)$, \eqref{M-Id}-\eqref{E-Id} below,  and the following Gagliardo-Nirenberg inequality
\begin{equation}
\label{GNI}
\|f\|_{L^{p+1}}\, \lesssim\,\|f\|_{L^2}^{1-\frac{N(p-1)}{2(p+1)}}\, \|\nabla f\|_{L^2}^{\frac{N(p-1)}{2(p+1)}},\quad \text{for all} \ f\in H^1(\R^N).
\end{equation}

For the sake of completeness, we give in Appendix \ref{appendix2} the proof of the global existence of \eqref{main} in the $L^2-$subcritical regime ($p<1+\frac{4}{N}$).

Therefore, we only consider the case $p\geq 1+\frac{4}{N}$. Our main result reads as follows.
\begin{theorem}
\label{GE}
Let $N\geq 3$, $1+\frac{4}{N}\leq p<1+\frac{4}{N-2}$ and $u_0\in H^1(\R^N)$. Then, there
exists a constant $C>0$ independent of $u_0$ and $\ba$ such that $T^*_{\ba}(u_0)=\infty$ for all $\underline{\ba}\geq  C\|u_0\|_{H^1}^{\theta}$, where $\underline{\ba}$ is given by \eqref{ais}, and 
\begin{equation}
\label{theta}
\theta=\frac{2(p-1)(p+1)}{4-(N-2)(p-1)}.
\end{equation}
\end{theorem}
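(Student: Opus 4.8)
The plan is to gauge away the damping from the linear part through the change of unknown $v(t,x):={\rm e}^{\baa(t)}u(t,x)$ indicated in comment~($vi$), and to reduce the global existence of $u$ to a uniform-in-time $H^1$ bound for $v$. A direct substitution turns \eqref{main} (with $\mu=-1$) into the focusing NLS with a time-decaying coupling,
\begin{equation*}
{\rm i}\partial_t v+\Delta v=-g(t)\,|v|^{p-1}v,\qquad v(0)=u_0,\qquad g(t):={\rm e}^{-(p-1)\baa(t)},
\end{equation*}
for which the mass $\|v(t)\|_{L^2}=\|u_0\|_{L^2}$ is conserved. Since ${\rm e}^{-\baa}\le 1$, any bound $\sup_{t\ge0}\|v(t)\|_{H^1}<\infty$ gives $\|\nabla u(t)\|_{L^2}={\rm e}^{-\baa(t)}\|\nabla v(t)\|_{L^2}\le\|\nabla v(t)\|_{L^2}$, which combined with \eqref{M-Id} and the blow-up alternative yields $T^*_{\ba}(u_0)=\infty$. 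The whole role of a large damping is now encoded in the smallness of one time-Lebesgue norm of $g$: with $\gamma:=\theta/(p-1)=\frac{2(p+1)}{4-(N-2)(p-1)}$, the bound $\baa(t)\ge\underline{\ba}\,t$ coming from \eqref{ais} gives
\begin{equation*}
\|g\|_{L^\gamma(0,\infty)}\le\Big((p-1)\gamma\,\underline{\ba}\Big)^{-1/\gamma}\lesssim\underline{\ba}^{-1/\gamma}.
\end{equation*}

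Next I would run a global Strichartz argument for $v$ on $[0,\infty)$. Let $r=p+1$, let $q=\frac{4(p+1)}{N(p-1)}$ so that $(q,r)$ is Schr\"odinger-admissible in the energy-subcritical range (this uses $N\ge3$), and set $X:=L^\infty_t H^1_x\cap L^q_t W^{1,r}_x$. Inhomogeneous Strichartz and the fractional Leibniz rule give
\begin{equation*}
\|v\|_{X}\lesssim\|u_0\|_{H^1}+\big\|\,g\,|v|^{p-1}v\,\big\|_{L^{q'}_t W^{1,r'}_x}.
\end{equation*}
Estimating the nonlinearity by H\"older in space, placing the $p-1$ undifferentiated factors in $L^\infty_t L^{p+1}_x$ (controlled by $\|v\|_{L^\infty_t H^1}$ through the Sobolev embedding $H^1(\R^N)\hookrightarrow L^{p+1}(\R^N)$) and the differentiated factor in $L^q_t L^{p+1}_x$, and then isolating the scalar weight $g$ by H\"older in time, one obtains
\begin{equation*}
\|v\|_{X}\le c_1\|u_0\|_{H^1}+c_2\,\|g\|_{L^\gamma(0,\infty)}\,\|v\|_{X}^{\,p},\qquad \tfrac1\gamma=1-\tfrac{N(p-1)}{2(p+1)}=\tfrac{4-(N-2)(p-1)}{2(p+1)}.
\end{equation*}
The crucial structural point is that the linear term is \emph{globally} finite, because the Strichartz norms of ${\rm e}^{{\rm i}t\Delta}u_0$ are finite on all of $[0,\infty)$.

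I would then close by a continuity (bootstrap) argument. Assuming $\|v\|_{X([0,T])}\le2c_1\|u_0\|_{H^1}$ and feeding this into the nonlinear estimate improves the bound to $\le2c_1\|u_0\|_{H^1}$ as soon as $c_2(2c_1)^{p-1}\|g\|_{L^\gamma}\|u_0\|_{H^1}^{p-1}\le c_1$. By the decay bound on $\|g\|_{L^\gamma}$ this holds whenever $\underline{\ba}^{1/\gamma}\ge C_0\|u_0\|_{H^1}^{p-1}$, that is, whenever $\underline{\ba}\ge C\|u_0\|_{H^1}^{(p-1)\gamma}=C\|u_0\|_{H^1}^{\theta}$ with $C$ independent of $u_0$ and $\ba$, as required by \eqref{theta}. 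A standard connectedness argument promotes the a priori bound to every $T$, producing the uniform $H^1$ control of $v$ and hence $T^*_{\ba}(u_0)=\infty$; the mass-critical endpoint $p=1+\frac4N$ (where $\gamma=\frac{N+2}{2}$ is still finite) is covered.

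The main obstacle is the nonlinear Strichartz estimate. One must verify that the single admissible pair $(q,p+1)$ is usable throughout $1+\frac4N\le p<1+\frac4{N-2}$ --- this is precisely the mass-supercritical, energy-subcritical regime --- and perform the time-H\"older splitting so that $g$ is isolated in exactly $L^\gamma_t$ with $\frac1\gamma=\frac{4-(N-2)(p-1)}{2(p+1)}$. Matching this exponent to $\theta=(p-1)\gamma$ is the quantitative heart of the argument, and it is here that the strict energy-subcriticality $4-(N-2)(p-1)>0$ (which keeps $\gamma<\infty$, equivalently keeps $\|g\|_{L^\gamma}$ small for large $\underline{\ba}$) and the dimension restriction $N\ge3$ are essential.
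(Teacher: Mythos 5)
Your argument is correct in substance and lands on the same threshold $\underline{\ba}\gtrsim\|u_0\|_{H^1}^{\theta}$, but it is organized differently from the paper's proof. The paper does \emph{not} gauge the equation for the global existence part: it works directly with $u$, writes Duhamel's formula \eqref{Integ-Eq} with the damped propagator $U_{\ba}(t)={\rm e}^{-\baa(t)}{\rm e}^{{\rm i}t\Delta}$, and invokes the Ohta--Todorova small-data criterion (Proposition \ref{Oh-To} in the paper, i.e.\ Proposition 3 of \cite{OhTo}, which survives verbatim because ${\rm e}^{-\baa(t)}\le 1$): global existence holds once $\|U_{\ba}(\cdot)u_0\|_{L^\theta(0,\infty;L^{p+1})}\le\varepsilon$. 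The damping then enters only through the \emph{linear} term: by unitarity of ${\rm e}^{{\rm i}t\Delta}$ on $H^1$ and Sobolev embedding, $\|U_{\ba}(\cdot)u_0\|^{\theta}_{L^\theta(0,\infty; L^{p+1})}\lesssim\|u_0\|_{H^1}^{\theta}\int_0^\infty{\rm e}^{-\theta\underline{\ba}t}\,dt\lesssim\|u_0\|_{H^1}^{\theta}\big/\big(\theta\,\underline{\ba}\big)$, and the theorem follows in two lines. You instead pass to $v={\rm e}^{\baa(t)}u$, so the damping is carried by the nonlinear coupling $g(t)={\rm e}^{(1-p)\baa(t)}$, and you rebuild the global Strichartz bootstrap from scratch, with smallness furnished by $\|g\|_{L^\gamma(0,\infty)}\lesssim\underline{\ba}^{-1/\gamma}$ and the identity $\theta=(p-1)\gamma$; your exponent accounting ($1/\gamma=1/q'-1/q$ for the admissible pair $(q,p+1)$, $q=\tfrac{4(p+1)}{N(p-1)}$) is consistent and does explain structurally why \eqref{theta} appears. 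Both proofs run on the same engine (Strichartz estimates plus a continuity argument, with ${\rm e}^{-\underline{\ba}t}$ providing the smallness), but the decompositions differ: the paper's criterion is a smallness condition on a \emph{scaling-critical} norm of the free evolution, which is weaker than smallness of $\|u_0\|_{H^1}$ and could in principle apply to large data with favorable profiles, while your version is self-contained (no appeal to \cite{OhTo}) at the cost of redoing the perturbative machinery that the paper imports. Two routine points to tidy in your write-up: the bootstrap condition should read $c_2(2c_1)^{p}\|g\|_{L^\gamma}\|u_0\|_{H^1}^{p-1}\le c_1$ (you dropped one factor of $2c_1\|u_0\|_{H^1}$ coming from $\|v\|_X^{p}=\|v\|_X^{p-1}\|v\|_X$), and the continuity argument needs a \emph{strict} improvement (say to $\tfrac{3}{2}c_1\|u_0\|_{H^1}$) together with continuity of $T\mapsto\|v\|_{X([0,T])}$ and its behavior as $T\to0^{+}$ to close; neither affects the conclusion.
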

\begin{remark}
~{\rm \begin{itemize}
\item[($i$)] The above theorem improves \cite[Theorem 1]{OhTo} in two ways. First, it covers the constant damping setting since in this case we clearly have $\underline{\ba}=\ba$. Second, it gives an explicit lower bound for $\underline{\ba}$ in terms of $\|u_0\|_{H^1}$. 
\item[($ii$)] Among many others, the non constant damping $\ba(t)=\lambda(1-{\rm e}^{-t}), \lambda>0$, is obviously covered by Theorem \ref{GE}.
\item[($iii$)] The proof of Theorem \ref{GE} uses similar ideas as in \cite{OhTo} and the restriction $p<1+\frac{4}{N-2}$ naturally appears when using \eqref{theta}. We have no clue whether such restriction on $p$ could be removed.
\end{itemize}
}
\end{remark}
\begin{remark}
~{\rm \begin{itemize}
\item[($i$)] It is worth to mention that to prove Theorem \ref{GE} we assume that  $\underline{\ba}>0$ which in particular yields $\displaystyle\int_0^\infty \ba(t)dt=\infty$ (see \eqref{Glo-epsi} below). A natural question arises: what happens if $\underline{\ba}=0$? \item[($ii$)] To answer the above question, one can introduce a refined measurement of the damping function  $\ba(t)$. Therefore, instead of \eqref{ais} we define the {\it weighted average measurement} ({\bf WAM}) of the damping function by
\begin{equation}
\label{WAM}
 \underline{\ba}(\alpha, \beta, \gamma, \delta, \sigma):=\inf_{t>0}\,\left(\frac{1}{\mathbf{w}(t)}{\displaystyle\int_0^t \ba(s) ds}\right),
\end{equation}
where $\mathbf{w}(t)=t^\alpha\,(1+t)^{\beta}  (\ln(1+t))^{\gamma} {\rm e}^{\delta t^{\sigma}}$ with an adequate choice of $\alpha, \beta, \gamma, \delta, \sigma \in\R$. Obviously, we have  $\underline{\ba}(1, 0,0,0,0)=\underline{\ba}$ as in \eqref{ais}. Furthermore, for the damping $(1+t)^{-\theta}$, the {\bf WAM} can be chosen as follows:
$$
(\alpha, \beta, \gamma, \delta, \sigma)=\left\{\begin{array}{ll}   (0,0,1,0,0) \quad &\text{if} \ \ \theta=1,\\
 (1,-\theta,0,0,0) \quad &\text{if} \ \ \theta<1.
\end{array}\right.$$
\item[($iii$)] We believe that the global existence result of Theorem \ref{GE} can be extended to the class of damping functions that satisfy the largeness of $\underline{\ba}(\alpha, \beta, \gamma, \delta, \sigma)$ for suitable parameters $\alpha, \beta, \gamma, \delta, \sigma$.
\item[($iv$)] We think that the {\bf WAM} can be used in many other damped equations.
\end{itemize}
}
\end{remark}

\medskip

We conclude the introduction with an outline of the paper. In Section \ref{S2}, we introduce some useful notations and auxiliary results. Section \ref{S3} is devoted to the proofs of our main results. Finally, in Appendix \ref{appendix1}, we show the utility of the refinement on the new assumptions on the damping term. Appendix \ref{appendix2} is concerned with the global existence in the focusing $L^2-$subcritical case.

\section{Useful tools \& Auxiliary results}
\label{S2}
To sate our main results in a clear way, we define the following quantities:
\begin{gather}
\label{Mass}
{\mathbf M}(u(t))=\int_{\R^N}\,|u(t,x)|^2\,dx,
\vspace{.3cm}\\
\label{Ener}
{\mathbf E}(u(t))=\|\nabla u(t)\|_{L^2}^2-\frac{2}{p+1}\int_{\R^N}\, |u(t,x)|^{p+1}\,dx,\vspace{.3cm}\\
\label{Iu}
{\mathbf I}(u(t))=\|\nabla u(t)\|_{L^2}^2-\int_{\R^N}\, |u(t,x)|^{p+1}\,dx,\vspace{.3cm}\\
\label{Ku}
{\mathbf K}(u(t))=\int_{\R^N}\, |x|^2 |u(t,x)|^2\,dx,\vspace{.3cm}\\
\label{Vu}
{\mathbf V}(u(t))=\ima \Big(\int_{\R^N} \left(x\cdot \nabla u(t,x)\right) \overline{u}(t,x)\,dx\Big),\vspace{.3cm}\\
\label{Pu}
{\mathbf P}(u(t))=\|\nabla u(t)\|_{L^2}^2-\frac{N(p-1)}{2(p+1)}\int_{\R^N}\, |u(t,x)|^{p+1}\,dx.
\end{gather}

Some crucial relationships between the above quantities are summarized in the following.
\begin{proposition}
\label{Rela}
Let $u$ be a sufficiently smooth solution of \eqref{main} with initial data $u(0,x)=u_0(x)$ on $0\leq t\leq T$. Then, we have
\begin{gather}
\label{M-Id}
{\mathbf M}(u(t))={\rm e}^{-2\baa(t)}{\mathbf M}(u_0),\vspace{.3cm}\\
\label{E-Id}
\frac{d}{dt}{\mathbf E}(u(t))=-2\ba(t) {\mathbf I}(u(t)),\vspace{.3cm}\\
\label{K-Id}
\frac{d}{dt}{\mathbf K}(u(t))+2\ba(t) {\mathbf K}(u(t))=4{\mathbf V}(u(t)),\vspace{.3cm}\\
\label{V-Id}
\frac{d}{dt}{\mathbf V}(u(t))+2\ba(t) {\mathbf V}(u(t))=2{\mathbf P}(u(t)).
\end{gather}
\end{proposition}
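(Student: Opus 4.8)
The plan is to obtain all four identities by the same mechanism: differentiate each functional in $t$, insert the evolution equation in solved form, and integrate by parts, discarding the many terms that are purely imaginary (hence drop out under $\rea$) or purely real (hence drop out under $\ima$). Since $\mu=-1$, it is convenient to rewrite \eqref{main} as $\partial_t u = {\rm i}\Delta u - \ba(t)u + {\rm i}|u|^{p-1}u$, and to record its conjugate $\partial_t\bar u = -{\rm i}\Delta\bar u - \ba(t)\bar u - {\rm i}|u|^{p-1}\bar u$. The smoothness hypothesis (together with the spatial decay implicit in $\bK(u(t))<\infty$) justifies differentiating under the integral sign and the boundary-free integrations by parts below; a standard truncation/density argument removes any remaining integrability scruple.

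For the mass \eqref{M-Id}, I would write $\frac{d}{dt}\bM = 2\rea\int_{\R^N}\bar u\,\partial_t u\,dx$ and substitute: the dispersive term $\rea\int {\rm i}\bar u\Delta u$ vanishes after one integration by parts (the integrand becomes ${\rm i}$ times the real quantity $-|\nabla u|^2$), and the nonlinear term $\rea({\rm i}|u|^{p+1})$ vanishes pointwise, leaving only $-2\ba(t)\bM$. This is a linear ODE whose solution is \eqref{M-Id}. For the energy \eqref{E-Id}, differentiating the two terms of $\bE$ and integrating by parts in the gradient term gives $\frac{d}{dt}\bE = -2\rea\int(\Delta\bar u + |u|^{p-1}\bar u)\,\partial_t u$; substituting $\Delta\bar u + |u|^{p-1}\bar u = {\rm i}\partial_t\bar u + {\rm i}\ba(t)\bar u$ from the conjugate equation kills the $\partial_t\bar u$ contribution (it is ${\rm i}|\partial_t u|^2$, purely imaginary) and reduces the rest to the damping term. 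Here one uses the auxiliary computation $\ima\int\bar u\,\partial_t u = -\bI(u)$, obtained exactly as in the mass step, to land on $-2\ba(t)\bI(u)$.

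The identity \eqref{K-Id} is the quickest virial computation: from $\frac{d}{dt}\bK = 2\rea\int|x|^2\bar u\,\partial_t u$ the damping term produces $-2\ba(t)\bK$ directly, the nonlinear term $\rea({\rm i}|x|^2|u|^{p+1})$ vanishes pointwise, and the dispersive term gives $-2\ima\int|x|^2\bar u\Delta u$. One integration by parts turns $\int|x|^2\bar u\Delta u$ into $-2\int\bar u(x\cdot\nabla u)$ plus the real quantity $-\int|x|^2|\nabla u|^2$, so its imaginary part is $-2\bV$, and \eqref{K-Id} follows.

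The main obstacle is \eqref{V-Id}, the virial identity for $\bV = \ima\int\bar u(x\cdot\nabla u)$. I would first differentiate and symmetrize via one integration by parts, reducing $\frac{d}{dt}\bV$ to $2A - N\,\ima\int\bar u\,\partial_t u$, where $A = \ima\int(x\cdot\nabla u)\,\partial_t\bar u$. Substituting the conjugate equation into $A$ peels off the damping contribution $-\ba(t)\bV$ and leaves two classical integrals to evaluate: the dispersive one $\rea\int(x\cdot\nabla u)\Delta\bar u = \frac{N-2}{2}\|\nabla u\|_{L^2}^2$ (two integrations by parts, using $2\,\rea(\partial_{jk}u\,\partial_k\bar u)=\partial_j|\partial_k u|^2$ and $\int x\cdot\nabla f = -N\int f$), and the nonlinear one $\rea\int|u|^{p-1}\bar u(x\cdot\nabla u) = -\frac{N}{p+1}\int|u|^{p+1}$ (using $|u|^{p-1}\rea(\bar u\,\partial_j u)=\frac{1}{p+1}\partial_j|u|^{p+1}$ and again $\int x\cdot\nabla f=-N\int f$). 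Collecting $2A$ with $N\bI$ coming from $\ima\int\bar u\,\partial_t u=-\bI$, the $\|\nabla u\|_{L^2}^2$ coefficients combine to $2$ and the $\int|u|^{p+1}$ coefficient to $-\frac{N(p-1)}{p+1}$, which is exactly $2\bP$, while the damping term is $-2\ba(t)\bV$; this is \eqref{V-Id}. The bookkeeping of real and imaginary parts in $A$ and the two integrations by parts in the dispersive integral are the only genuinely delicate points.
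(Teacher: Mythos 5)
Your proposal is correct: all four identities, the sign bookkeeping, and the constants (including the combination $2\|\nabla u\|_{L^2}^2-\frac{N(p-1)}{p+1}\int|u|^{p+1}=2\mathbf{P}(u)$ in \eqref{V-Id}) check out. This is essentially the same approach as the paper, which omits the details and simply notes that the computation mimics the constant-damping case of Tsutsumi's Lemma 1 — i.e., exactly the direct differentiation and integration-by-parts argument you carried out in full.
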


\begin{proof}[{Proof of Proposition \ref{Rela}}]
Although we deal here with a time-dependent damping, the proof mimics the same steps performed in \cite[Lemma 1]{Tsut1} where the constant damping is considered. The details are hence omitted.
\end{proof}

In order to facilitate the reading of this paper, we collect here some mathematical tools. 
{The following continuity argument (or bootstrap argument) will also be useful for our purpose. 
\begin{lemma}\cite[Lemma 3.7, p. 437]{Strauss}\\
\label{boots}
Let $\mathbf{I}\subset\R$ be a time interval, and $\mathbf{X} : \mathbf{I}\to [0,\infty)$ be a continuous function satisfying, for every $t\in \mathbf{I}$,
\begin{equation*}
		\label{boots1}
		    	\mathbf{X}(t) \leq a + b [\mathbf{X}(t)]^\theta,
		\end{equation*}
	where $a,b>0$ and $\theta>1$ are constants. Assume that, for some $t_0\in \mathbf{I}$,
		\begin{equation*}
		\label{boots2}
	\mathbf{X}(t_0)\leq a, \quad a\,b^{\frac{1}{\theta-1}} <(\theta-1)\,\theta^{\frac{\theta}{1-\theta}}.
				\end{equation*}
		Then, for every $ t\in \mathbf{I}$, we have
		\begin{equation*}
		\label{boots3}
		    	\mathbf{X}(t)< \frac{\theta\,a}{\theta-1}.
		\end{equation*}
\end{lemma}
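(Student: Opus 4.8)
The plan is to reduce this functional inequality to an elementary one-variable analysis followed by a continuity (bootstrap) argument. The starting observation is that the hypothesis $\mathbf{X}(t)\le a+b[\mathbf{X}(t)]^\theta$ says precisely that $f(\mathbf{X}(t))\ge 0$ for all $t\in\mathbf{I}$, where $f(x):=a+bx^\theta-x$ on $[0,\infty)$. Thus the whole trajectory $\mathbf{X}$ is confined to the super-level set $S:=\{x\ge 0:\ f(x)\ge 0\}$, and everything reduces to understanding the geometry of $S$ together with where $\mathbf{X}(t_0)$ sits inside it.

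First I would study $f$. Since $\theta>1$, the derivative $f'(x)=b\theta x^{\theta-1}-1$ vanishes at the unique point $x_*=(b\theta)^{-1/(\theta-1)}$, so $f$ is strictly decreasing on $[0,x_*]$ and strictly increasing on $[x_*,\infty)$, with $f(0)=a>0$ and $f(x)\to\infty$. A short computation gives $f(x_*)=a-\frac{\theta-1}{\theta}\,x_*$, and substituting $x_*$ shows that the smallness hypothesis $a\,b^{1/(\theta-1)}<(\theta-1)\,\theta^{\theta/(1-\theta)}$ is \emph{equivalent} to $f(x_*)<0$. Hence $f$ has exactly two roots $0<x_1<x_*<x_2$, so that $S=[0,x_1]\cup[x_2,\infty)$ while $f<0$ on the open ``forbidden band'' $(x_1,x_2)$. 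This dichotomy of $S$ into two components separated by a gap is the structural fact that drives the argument.

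Next I would locate the initial value and run the continuity argument. Because $f(x_*)<0$ forces $a<\frac{\theta-1}{\theta}x_*<x_*$, and since $f(a)=ba^\theta>0$ with $f$ strictly decreasing on $[0,x_*]$, monotonicity yields $a<x_1$; therefore $\mathbf{X}(t_0)\le a<x_1$. Now, as $\mathbf{I}$ is an interval (hence connected) and $\mathbf{X}$ is continuous with values in $S=[0,x_1]\cup[x_2,\infty)$, the function $\mathbf{X}$ cannot cross the gap $(x_1,x_2)$: if $\mathbf{X}(t_1)\ge x_2$ for some $t_1$, the intermediate value theorem applied between $t_0$ and $t_1$ would produce a time at which $\mathbf{X}\in(x_1,x_2)$, contradicting $f(\mathbf{X})\ge 0$. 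Consequently $\mathbf{X}(t)\le x_1$ for every $t\in\mathbf{I}$. I expect this trapping step to be the main obstacle, in the sense that it is the genuine content of the lemma (the ``continuity/bootstrap'' mechanism), whereas the rest is routine calculus; it is essential here that $\mathbf{I}$ be connected and $\mathbf{X}$ continuous.

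Finally I would convert the bound $\mathbf{X}(t)\le x_1$ into the stated explicit estimate. At the root one has $x_1=a+bx_1^\theta$ with $x_1<x_*$; since $bx_*^{\theta-1}=1/\theta$ and $\theta>1$, monotonicity of $x\mapsto x^{\theta-1}$ gives $bx_1^{\theta-1}<1/\theta$, whence $bx_1^\theta<x_1/\theta$. Substituting back yields $x_1=a+bx_1^\theta<a+\tfrac{1}{\theta}x_1$, i.e. $x_1<\frac{\theta a}{\theta-1}$. Combining with the previous step gives $\mathbf{X}(t)\le x_1<\frac{\theta a}{\theta-1}$ for all $t\in\mathbf{I}$, which is the desired conclusion.
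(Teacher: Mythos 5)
Your proof is correct, and all the computations check out: the smallness hypothesis $a\,b^{\frac{1}{\theta-1}}<(\theta-1)\,\theta^{\frac{\theta}{1-\theta}}$ is indeed equivalent to $f(x_*)<0$ for $f(x)=a+bx^\theta-x$ and $x_*=(b\theta)^{-\frac{1}{\theta-1}}$, the trapping of $\mathbf{X}$ in $[0,x_1]$ via connectedness of $\mathbf{I}$ and the intermediate value theorem is sound, and the root estimate $x_1<\frac{\theta a}{\theta-1}$ follows from $bx_1^{\theta-1}<bx_*^{\theta-1}=\frac{1}{\theta}$ exactly as you say. Note that the paper itself gives no proof of this lemma — it is quoted from Strauss \cite[Lemma 3.7, p.~437]{Strauss} — so there is no in-paper argument to compare against; your argument is the standard one for such bootstrap lemmas (analysis of the forbidden band $(x_1,x_2)$ plus a continuity argument), and it is complete as written.
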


Using a change of unknowns $v(t,x)={\rm e}^{\baa(t)}u(t,x)$, where $\baa(t)$ is given by \eqref{A}, the equation \eqref{main} can be written as
\begin{equation}\label{main-bis}
\left\{
\begin{array}{rcl}
{\rm i}\partial_t v+\Delta v &=&-{\rm e}^{(1-p)\baa(t)}|v|^{p-1}v,\quad (t,x)\in [0,\infty)\times\R^N,\\
v(0,x)&=&u_0(x).
\end{array}
\right.
\end{equation}
Obviously, the blow-up or the global existence of $v$ implies those of $u$, and vice-versa.

The local well-posedness of \eqref{main-bis} follows as for the classical NLS since $t \mapsto {\rm e}^{(1-p)\baa(t)}$ is bounded, see e.g. \cite{Cazenave}. Furthermore, the mass conservation still holds true for $v$, namely
\begin{equation}\label{mass-v}
    \mathbf{M}(v(t))=\mathbf{M}(u_0).
\end{equation}
Now, we define the Hamiltonian of $v$ by
\begin{equation}\label{Hamil-v}
    \mathbf{H}(v(t)):=\|\nabla v(t)\|_{L^2}^2-\frac{2}{p+1}{\rm e}^{(1-p)\baa(t)}\|v(t)\|_{L^{p+1}}^{p+1}.
\end{equation}
A direct computation gives
\begin{equation}\label{Hamil-v-dt}
    \frac{d}{dt}\mathbf{H}(v(t))=\frac{2(p-1)}{p+1} \,\ba(t){\rm e}^{(1-p)\baa(t)}\|v(t)\|_{L^{p+1}}^{p+1}.
\end{equation}
Integrating the above equation in time yields
\begin{equation}\label{Hamil-v-id}
    \mathbf{H}(v(t))=\mathbf{E}(u_0)+\frac{2(p-1)}{p+1}\int_0^t  \ba(s){\rm e}^{(1-p)\baa(s)}\, \|v(s)\|_{L^{p+1}}^{p+1}\, ds.
\end{equation}

\begin{lemma} [Virial identity] \label{lem-viri-iden}
		Let $1< p\le \frac{N+2}{N-2}$ ($1< p<\infty$ if $N=1,2$). Let $\varphi: \R^N \rightarrow \R$ be a sufficiently smooth and decaying function. Let $v \in C([0,T), H^1(\R^N))$ be a solution to \eqref{main-bis}. Define
		\begin{align} \label{V-varphi}
		V_{\varphi}(t) := \int_{\R^N} \varphi(x)|v(t,x)|^2 dx.
		\end{align}
		Then, we have
		\[
		V'_{\varphi}(t) =  2 \ima \int_{\R^N} \overline{v}(t,x)\bigg(\nabla \varphi(x) \cdot \nabla v(t,x)\bigg) dx,
		\]
		and
		\begin{align}\label{V-second}
		V''_{\varphi}(t) &= - \int_{\R^N} \Delta^2 \varphi(x) |v(t,x)|^2 dx + 4 \rea \sum_{j,k=1}^N \int_{\R^N} \partial^2_{jk} \varphi(x) \partial_j\overline{v}(t,x) \partial_k v(t,x) dx \nonumber\\
		&\mathrel{\phantom{=}} - \frac{2(p-1)}{p+1} {\rm e}^{(1-p)\baa(t)}\int_{\R^N} \Delta \varphi(x)  |v(t,x)|^{p+1} dx.
		\end{align}
	\end{lemma}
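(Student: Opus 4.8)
The plan is to verify both identities by differentiating in time, using \eqref{main-bis} to eliminate $\partial_t v$, and then integrating by parts. Throughout I would treat $v$ as a smooth, spatially decaying solution, recovering the stated $H^1$ generality by the usual density argument: approximate $u_0$ by Schwartz data, establish the identity for the resulting smooth solutions, and pass to the limit using $v\in C([0,T),H^1)$ together with the decay of $\varphi$ and its derivatives. For the first-order formula I would start from $V'_{\varphi}(t)=2\,\rea\int_{\R^N}\varphi\,\overline{v}\,\partial_t v\,dx$ and substitute $\partial_t v={\rm i}\Delta v+{\rm i}\,{\rm e}^{(1-p)\baa(t)}|v|^{p-1}v$. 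The nonlinear piece contributes $\rea\big({\rm i}\,{\rm e}^{(1-p)\baa(t)}|v|^{p+1}\big)=0$ pointwise, so only the kinetic part survives, giving $V'_{\varphi}=-2\,\ima\int_{\R^N}\varphi\,\overline{v}\,\Delta v\,dx$. A single integration by parts, discarding the real quantity $\int_{\R^N}\varphi|\nabla v|^2\,dx$, then yields the claimed formula $V'_{\varphi}=2\,\ima\int_{\R^N}\overline{v}\,(\nabla\varphi\cdot\nabla v)\,dx$.

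For the second derivative I would differentiate this first-order formula and again substitute $\partial_t v$ and $\partial_t\overline{v}$ from \eqref{main-bis}, splitting the outcome into a kinetic contribution and a nonlinear contribution. A useful observation is that the coefficient ${\rm e}^{(1-p)\baa(t)}$ enters only through $\partial_t v$ and is never itself differentiated, so no term involving $\ba(t)$ appears; this is exactly consistent with the form of \eqref{V-second}.

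The nonlinear contribution reduces to
\[
2\,{\rm e}^{(1-p)\baa(t)}\,\rea\int_{\R^N}\nabla\varphi\cdot\Big(\overline{v}\,\nabla(|v|^{p-1}v)-|v|^{p-1}\overline{v}\,\nabla v\Big)\,dx .
\]
Using $\nabla(|v|^{p-1})=\tfrac{p-1}{2}|v|^{p-3}\nabla(|v|^2)$ and $\overline{v}\nabla v+v\nabla\overline{v}=\nabla(|v|^2)$, one checks that the bracket telescopes to the real field $\tfrac{p-1}{p+1}\nabla(|v|^{p+1})$. A final integration by parts then produces $-\tfrac{2(p-1)}{p+1}{\rm e}^{(1-p)\baa(t)}\int_{\R^N}\Delta\varphi\,|v|^{p+1}\,dx$, which is precisely the last term of \eqref{V-second}.

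The kinetic contribution is $2\,\rea\int_{\R^N}\big(\overline{v}\,(\nabla\varphi\cdot\nabla\Delta v)-\Delta\overline{v}\,(\nabla\varphi\cdot\nabla v)\big)\,dx$, the classical kinetic virial expression; repeated integrations by parts, moving derivatives off $\Delta v$ onto $\varphi$ and $\overline{v}$ and symmetrizing in the indices $j,k$, convert it into $-\int_{\R^N}\Delta^2\varphi\,|v|^2\,dx+4\,\rea\sum_{j,k=1}^N\int_{\R^N}\partial^2_{jk}\varphi\,\partial_j\overline{v}\,\partial_k v\,dx$. The main obstacle is exactly this kinetic term: the bookkeeping of the several integrations by parts, and the correct emergence of the Hessian form $\partial^2_{jk}\varphi\,\partial_j\overline{v}\,\partial_k v$ with the factor $4$, requires care, as does the rigorous justification of each integration by parts for merely $H^1$ solutions, which is handled by the smoothing argument noted above. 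By comparison, the nonlinear term is a short algebraic simplification.
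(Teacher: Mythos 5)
Your proposal is correct, but it takes a different route from the paper: the paper does not carry out the computation at all, it simply invokes \cite[Lemma 5.3]{Tao07} applied to the nonlinearity $\mathcal{N}=-{\rm e}^{(1-p)\baa(t)}|v|^{p-1}v$, whereas you derive the identity from scratch. Your calculation is sound at every step: the substitution $\partial_t v={\rm i}\Delta v+{\rm i}\,{\rm e}^{(1-p)\baa(t)}|v|^{p-1}v$ is the correct rewriting of \eqref{main-bis}; the nonlinear bracket indeed collapses to the real field $\tfrac{p-1}{p+1}\nabla\big(|v|^{p+1}\big)$ (since $\overline{v}\,\nabla(|v|^{p-1}v)-|v|^{p-1}\overline{v}\,\nabla v=|v|^2\nabla(|v|^{p-1})=(p-1)|v|^p\nabla|v|$), giving exactly the last term of \eqref{V-second}; and the kinetic contribution $2\rea\int\big(\overline{v}\,(\nabla\varphi\cdot\nabla\Delta v)-\Delta\overline{v}\,(\nabla\varphi\cdot\nabla v)\big)dx$ does reduce, after the integrations by parts you describe, to $-\int\Delta^2\varphi\,|v|^2+4\rea\sum_{j,k}\int\partial^2_{jk}\varphi\,\partial_j\overline{v}\,\partial_k v$, because the terms $\mp 2\int\Delta\varphi\,|\nabla v|^2$ produced along the way cancel. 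Your observation that the damping enters only through $\partial_t v$ — so that no $\ba(t)$-term can appear in \eqref{V-second}, in contrast with \eqref{Hamil-v-dt} where the explicit factor ${\rm e}^{(1-p)\baa(t)}$ is differentiated — is precisely the point that makes the citation to \cite{Tao07} legitimate in the paper: there the time-dependent coefficient is simply carried as a constant-in-$x$ prefactor of the momentum bracket $\{\mathcal{N},v\}_p=-\tfrac{p-1}{p+1}{\rm e}^{(1-p)\baa(t)}\nabla(|v|^{p+1})$. What each approach buys: the paper's citation is shorter and inherits from \cite{Tao07} the rigorous justification for rough solutions, while your self-contained derivation exposes the exact constants and the mechanism, at the cost of having to justify the integrations by parts for $H^1$ solutions — which you correctly handle by regularization and continuous dependence, the same device that underlies the cited lemma.
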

 The proof of \eqref{V-second} is a straightforward application of  \cite[Lemma 5.3]{Tao07} for the nonlinearity $\mathcal{N}=-{\rm e}^{(1-p)\baa(t)}|v|^{p-1}v$. Note that if, in addition, the smooth function $\varphi$ and the solution $v$ are radial then
 \begin{equation}
    \label{phi-j-k}
    \sum_{j,k=1}^N \partial^2_{jk} \varphi\, \partial_j\overline{v} \,\partial_k v=\varphi''\,|\nabla v|^2.
 \end{equation}
 
Set $\varphi(x) = |x|^2$ in \eqref{V-varphi}. A straightforward computation yields the following virial identity.
\begin{lemma}\label{lem-viri-iden-x2}
Let $1< p\le \frac{N+2}{N-2}$ ($1< p<\infty$ if $N=1,2$). For $u_0 \in \Sigma(\R^N)$, the maximal solution to \eqref{main-bis} satisfies $v \in C([0,T^*), \Sigma(\R^N))$ and
    \begin{align} \label{viri-iden-x2}
	\frac{d^2}{dt^2}\int_{\R^N} |x|^2 |v(t,x)|^2dx=8 \|\nabla v(t)\|_{L^2}^2-{\frac{4N(p-1)}{p+1}}{\rm e}^{(1-p)\baa(t)}\int_{\R^N} |v(t,x)|^{p+1}dx.
	\end{align}
\end{lemma}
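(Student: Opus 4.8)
The plan is to obtain \eqref{viri-iden-x2} by specializing the general virial identity \eqref{V-second} of Lemma \ref{lem-viri-iden} to the weight $\varphi(x)=|x|^2$. For this choice one computes directly $\partial_j\varphi=2x_j$, $\partial^2_{jk}\varphi=2\delta_{jk}$, $\Delta\varphi=2N$, and $\Delta^2\varphi\equiv 0$. Substituting into \eqref{V-second}, the biharmonic term drops out since $\Delta^2\varphi\equiv 0$; the Hessian term becomes $4\rea\sum_{j,k}\int_{\R^N} 2\delta_{jk}\,\partial_j\overline{v}\,\partial_k v\,dx=8\int_{\R^N}|\nabla v|^2\,dx=8\|\nabla v\|_{L^2}^2$, the $\rea$ being harmless because the sum is a nonnegative real quantity; and the nonlinear term becomes $-\frac{2(p-1)}{p+1}{\rm e}^{(1-p)\baa(t)}\cdot 2N\int_{\R^N}|v|^{p+1}\,dx=-\frac{4N(p-1)}{p+1}{\rm e}^{(1-p)\baa(t)}\int_{\R^N}|v|^{p+1}\,dx$. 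Adding the three contributions reproduces exactly \eqref{viri-iden-x2}.

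The real content of the proof is not this algebra but its justification, because $\varphi(x)=|x|^2$ is unbounded and thus lies outside the class of decaying weights for which Lemma \ref{lem-viri-iden} was stated. Two separate points have to be dealt with. First comes the persistence claim $v\in C([0,T^*),\Sigma(\R^N))$: for $u_0\in\Sigma(\R^N)$ I must show that $\bK(v(t))=\int_{\R^N}|x|^2|v|^2\,dx$ stays finite on the maximal existence interval. Since the nonlinearity of \eqref{main-bis} is the usual $H^1$-subcritical (or critical) power multiplied by the bounded, smooth-in-time factor ${\rm e}^{(1-p)\baa(t)}$, this factor does not disturb the classical $\Sigma$-persistence theory, and I would invoke the standard argument (see \cite{Cazenave}): one differentiates $\bK$ against a truncated weight, bounds the outcome in terms of $\|v\|_{H^1}$ and $\bK(v)^{1/2}$, and closes a Gronwall inequality to control $\bK(v(t))$ on compact subintervals.

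Second, I would make the formal substitution rigorous by a truncation/approximation argument. Take $\varphi_R(x)=R^2\phi(x/R)$, where $\phi$ is smooth with $\phi(y)=|y|^2$ for $|y|\le 1$ and $\phi$ bounded, so that $\varphi_R(x)=|x|^2$ on $\{|x|\le R\}$ while $|\partial^2_{jk}\varphi_R|\lesssim 1$, $|\Delta\varphi_R|\lesssim 1$ and $|\Delta^2\varphi_R|\lesssim R^{-2}$. Applying Lemma \ref{lem-viri-iden} to each admissible $\varphi_R$ and letting $R\to\infty$, the biharmonic term vanishes by the $R^{-2}$ bound together with mass conservation, while the Hessian and nonlinear terms converge to their stated values by dominated convergence, using the finiteness of $\bK(v(t))$ and of $\|v(t)\|_{H^1}$ from the persistence step to control the tails on $\{|x|>R\}$.

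I expect the persistence in $\Sigma(\R^N)$ to be the only genuine obstacle. Differentiating $\bK(v)$ generates a term pairing the weight $x$ against $\overline{v}\,\nabla v$, which is not immediately absorbed, so the truncation must be arranged with some care to ensure that the commutator-type contributions disappear in the limit. By contrast, the derivative computation for $\varphi=|x|^2$ and the final assembly of the three terms are entirely routine.
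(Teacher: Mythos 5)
Your proposal is correct and follows essentially the paper's own route: the paper obtains \eqref{viri-iden-x2} precisely by setting $\varphi(x)=|x|^2$ in the general virial identity \eqref{V-second} of Lemma \ref{lem-viri-iden}, dismissing the substitution as ``a straightforward computation.'' The $\Sigma(\R^N)$-persistence and the truncation argument you add are the standard justifications (cf.\ \cite{Cazenave}) that the paper leaves implicit, so your write-up is, if anything, more complete than the original.
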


In order to prove our blow-up results for radially symmetric initial data,  we need some localized virial estimates. For that purpose, we introduce 
\[
	\Theta(r):= \int_0^r  \zeta(s)ds,
	\]
where the smooth  function $\zeta: [0,\infty) \rightarrow [0,\infty)$ is given by\footnote{The conditions on the behavior of the function $\zeta$ on $1<r<2$ is only necessary in the $L^2$-critical regime. For the inter-critical regime, it suffices  to consider a smooth nonnegative function $\zeta$ satisfying $\zeta(r)=2r$ for $0\leq r\leq 1$ and $\zeta(r)=0$ for $r\geq 2$.}
	\begin{equation}
 \label{zeta}
	\zeta(r)=
	\left\{
	\renewcommand*{\arraystretch}{1.2}
	\begin{array}{c c c}
	2r &\text{if} & 0 \leq r\leq 1,\\
	2[r-(r-1)^5] &\text{if} & 1<r\leq 1+\frac{1}{\sqrt[4]{5}},\\
	\zeta'(r)<0 &\text{if} & 1+\frac{1}{\sqrt[4]{5}}<r<2,\\
	0 &\text{if}  & r\geq2.
	\end{array}
	\right.
	\end{equation}

	For $R>0$, we define the following radial cut-off function
	\begin{align} \label{varphi-R}
	\varphi_R(x) = \varphi_R(r):= R^2\,\Theta\left(\frac{r}{R}\right), \quad r=|x|.
	\end{align}
	One can easily verify that, for all $r=|x|$,
	\begin{align}
 \label{phi-prop}
	\varphi''_R(r) \in [0,2], \quad \varphi'_R(r) \leq 2r,  \quad \Delta \varphi_R(x) \leq 2N.
	\end{align}

 \begin{lemma} \label{lem-viri-est-inte}
		Let $N\geq 2$ and $p\le \min \left(5,\frac{N+2}{N-2}\right)$. Let $u_0 \in  H^1_{\rad}(\R^N)$ and $v \in C([0,T^*), H^1_{\rad}(\R^N))$ be the maximal solution to \eqref{main-bis}. Let $\varphi_R$ be as in \eqref{varphi-R} and define $V_{\varphi_R}(t)$ as in \eqref{V-varphi}. Then for any $R>0$, we have for all $t\in [0,T^*)$,
		\begin{align} \label{viri-est-inte}
		\begin{aligned}
		V''_{\varphi_R}(t) \leq 8 \|\nabla v(t)\|^2_{L^2} &- {\frac{4N(p-1)}{p+1}}{\rm e}^{(1-p)\baa(t)}\int_{\R^N} |v(t,x)|^{p+1}dx \\
		&+ CR^{-2} + \left\{
		\begin{array}{ccc}
		C R^{-2(N-1)}\|\nabla v(t)\|^2_{L^2} &\text{if} & p=5, \\
		C R^{-\frac{(N-1)(p-1)}{2}} \left(\|\nabla v(t)\|^2_{L^2} + 1\right) &\text{if}& p<5,
		\end{array}
		\right.
		\end{aligned}
		\end{align}
		where $C$ is a positive constant.
	\end{lemma}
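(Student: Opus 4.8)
The plan is to start from the general virial identity \eqref{V-second} and specialize it to the radial setting. Since both $\varphi_R$ and $v$ are radial, identity \eqref{phi-j-k} converts the Hessian term into $4\int_{\R^N}\varphi_R''\,|\nabla v|^2\,dx$, so that
\begin{align*}
V_{\varphi_R}''(t) = {}& -\int_{\R^N}\Delta^2\varphi_R\,|v|^2\,dx + 4\int_{\R^N}\varphi_R''\,|\nabla v|^2\,dx\\
&- \frac{2(p-1)}{p+1}{\rm e}^{(1-p)\baa(t)}\int_{\R^N}\Delta\varphi_R\,|v|^{p+1}\,dx.
\end{align*}
I would then compare this with the exact identity \eqref{viri-iden-x2}, which corresponds to $\varphi=|x|^2$ (for which $\varphi''=2$, $\Delta\varphi=2N$ and $\Delta^2\varphi=0$), by adding and subtracting the two leading quantities $8\|\nabla v\|_{L^2}^2$ and $\frac{4N(p-1)}{p+1}{\rm e}^{(1-p)\baa(t)}\int_{\R^N}|v|^{p+1}\,dx$. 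This isolates three error terms: a gradient defect $4\int_{\R^N}(\varphi_R''-2)|\nabla v|^2\,dx$, the biharmonic term, and a nonlinear defect $\frac{2(p-1)}{p+1}{\rm e}^{(1-p)\baa(t)}\int_{\R^N}(2N-\Delta\varphi_R)|v|^{p+1}\,dx$.

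The gradient defect is immediate: since $\varphi_R''\le 2$ by \eqref{phi-prop}, it is nonpositive and may simply be discarded when bounding $V_{\varphi_R}''$ from above. For the biharmonic term I would use the scaling $\Delta^2\varphi_R(x)=R^{-2}(\Delta^2\Phi)(x/R)$ with $\Phi(y)=\Theta(|y|)$; as $\Delta^2\Phi$ is bounded and supported in $\{1\le|y|\le 2\}$, this gives $|\Delta^2\varphi_R|\le CR^{-2}$ on $\{R\le|x|\le 2R\}$ and zero elsewhere. Combined with the mass conservation \eqref{mass-v}, it yields $\big|\int_{\R^N}\Delta^2\varphi_R\,|v|^2\,dx\big|\le CR^{-2}\|u_0\|_{L^2}^2$, i.e. the $CR^{-2}$ term in \eqref{viri-est-inte}.

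The crux is the nonlinear defect. Here $2N-\Delta\varphi_R$ is bounded and supported in $\{|x|\ge R\}$ (it vanishes where $\varphi_R(x)=|x|^2$), while ${\rm e}^{(1-p)\baa(t)}\le 1$ because $p>1$ and $\baa(t)\ge 0$; thus this defect is controlled by $C\int_{|x|\ge R}|v|^{p+1}\,dx$. To estimate the latter I would invoke the radial Sobolev (Strauss) inequality, valid for $N\ge 2$, giving $\sup_{|x|\ge R}|v|\le CR^{-(N-1)/2}\|v\|_{L^2}^{1/2}\|\nabla v\|_{L^2}^{1/2}$. Writing $|v|^{p+1}=|v|^{p-1}|v|^2$ and applying \eqref{mass-v} once more leads to
\[
\int_{|x|\ge R}|v|^{p+1}\,dx \le C\,R^{-\frac{(N-1)(p-1)}{2}}\,\|\nabla v\|_{L^2}^{\frac{p-1}{2}}.
\]
For $p=5$ the exponent $\frac{p-1}{2}$ equals $2$, recovering exactly the term $CR^{-2(N-1)}\|\nabla v\|_{L^2}^2$; for $p<5$ one has $\frac{p-1}{2}<2$, so the elementary inequality $x^{(p-1)/2}\le x^2+1$ produces $CR^{-\frac{(N-1)(p-1)}{2}}(\|\nabla v\|_{L^2}^2+1)$. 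Collecting the three contributions gives \eqref{viri-est-inte}.

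I expect the main obstacle to be precisely this last step: extracting the correct powers of $R$ and $\|\nabla v\|_{L^2}$ from the exterior integral of the nonlinearity. It is here that the hypothesis $p\le\min\!\left(5,\frac{N+2}{N-2}\right)$ becomes essential — the restriction $p\le 5$ is exactly what guarantees $\frac{p-1}{2}\le 2$, so that the resulting gradient factor can be absorbed into $\|\nabla v\|_{L^2}^2+1$ (or kept as $\|\nabla v\|_{L^2}^2$ when $p=5$), whereas a larger $p$ would generate a superquadratic power of $\|\nabla v\|_{L^2}$ that the argument cannot accommodate. All remaining steps are routine once the cut-off scaling and the mass conservation \eqref{mass-v} are in hand.
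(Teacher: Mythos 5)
Your proof is correct and is essentially the argument the paper intends: the paper establishes this lemma by invoking \eqref{V-second} with $\varphi=\varphi_R$ and following the proof of \cite[Lemma 5.4]{DMS}, which is precisely your route — the radial reduction \eqref{phi-j-k}, the comparison with the $|x|^2$ identity, the bound $|\Delta^2\varphi_R|\lesssim R^{-2}$ combined with mass conservation \eqref{mass-v}, and the radial Strauss inequality on $\{|x|\ge R\}$ followed by Young's inequality. Your identification of $p\le 5$ as the condition making $\frac{p-1}{2}\le 2$ (so the exterior nonlinear term is absorbed into $\|\nabla v\|_{L^2}^2+1$) matches the paper's own remark that this restriction is needed exactly for the Young inequality step and is only relevant when $N=2$.
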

\begin{remark}
~{\rm
    \begin{itemize}
        \item[($i$)] The proof of Lemma \ref{lem-viri-est-inte} can be obtained by using  \eqref{V-second} with $\varphi = \varphi_R$ and by following the proof of \cite[Lemma 5.4]{DMS}.
        \item[($ii$)] Note that $$\min \left(5,\frac{N+2}{N-2}\right)=\left\{
		\begin{array}{ccc}
		 \displaystyle \frac{N+2}{N-2}&\text{if} & N \ge 3, \\
		5 &\text{if}& N=2,
		\end{array}
		\right.$$
  with the convention that $\frac{N+2}{N-2}=\infty$ if $N=2$. Consequently, the restriction on $p \le 5$ is only relevant in  dimension two, which is needed when applying the Young inequality. Removing this restriction in the two-dimensional case is an interesting open problem. 
        \item[($iii$)] Employing \eqref{viri-est-inte} together with \eqref{Hamil-v} and \eqref{Hamil-v-id}, we infer that
        \begin{align} \label{viri-est-consq}
		\begin{aligned}
		V''_{\varphi_R}(t) \leq& \ 2N(p-1) E(u_0)+ {\frac{4N(p-1)^2}{p+1}}\int_0^t  \ba(s){\rm e}^{(1-p)\baa(s)}\, \|v(s)\|_{L^{p+1}}^{p+1}\, ds
   \\& +2(4-N(p-1)) \|\nabla v(t)\|^2_{L^2}+ CR^{-2}\\
		& + \left\{
		\begin{array}{ccc}
		C R^{-2(N-1)}\|\nabla v(t)\|^2_{L^2} &\text{if} & p=5, \\
		C R^{-\frac{(N-1)(p-1)}{2}} \left(\|\nabla v(t)\|^2_{L^2} + 1\right) &\text{if}& p<5,
		\end{array}
		\right.
		\end{aligned}
		\end{align}
		where $C$ is a positive constant.
  \item[($iv$)] As we will see later, the inequality  \eqref{viri-est-consq} is only useful when $4-N(p-1) <0$, that is $p$ is $L^2-$supercritical.  However, when $p$ is $L^2-$critical, that is $4-N(p-1) =0$, we need a refined version of \eqref{viri-est-inte} as we will state in Lemma \ref{lem-viri-est-mass} below.
  
    \end{itemize}}
\end{remark}

 \begin{lemma}\label{lem-viri-est-mass}
		Let $N\geq 2$ and $p=1+\frac{4}{N}$. Let $u_0 \in  H^1_{\rad}(\R^N)$ and $v \in C([0,T^*), H^1_{\rad}(\R^N))$ be the maximal solution to \eqref{main-bis}. Let $\varphi_R$ be as in \eqref{varphi-R} and define $V_{\varphi_R}(t)$ as in \eqref{V-varphi}. Then for any $R, \varepsilon>0$, we have for all $t\in [0,T^*)$,
		\begin{align} \label{viri-est-mass}
		\begin{aligned}
		V_{\varphi_R}''(t) \leq 8\mathbf{H}(v(t))&+CR^{-2}+C \varepsilon R^{-2} + C\varepsilon^{-\frac{1}{N-1}}R^{-2} \\
		&-\int_{|x|>R}\Big(4\varphi_{1,R}(r)-\varepsilon(\varphi_{2,R}(r))^{\frac{N}{2}}\Big)|\nabla v(t,x)|^2dx,
		\end{aligned}
		\end{align}
		where $\mathbf{H}$ is given by \eqref{Hamil-v}, $C$ is a positive constant, and
		\begin{align} \label{psi-12-R}
		\varphi_{1,R}(r):=2-\varphi_R''(r),\quad \varphi_{2,R}(x):=2N-\Delta\varphi_R(x).
		\end{align}
	\end{lemma}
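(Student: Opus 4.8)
The plan is to start from the general localized virial identity \eqref{V-second} specialized to $\varphi=\varphi_R$ in the radial setting, and then to reorganize everything around the Hamiltonian $\mathbf{H}(v(t))$ from \eqref{Hamil-v}. Since both $\varphi_R$ and $v$ are radial, \eqref{phi-j-k} turns the Hessian term in \eqref{V-second} into $4\int_{\R^N}\varphi_R''(r)|\nabla v|^2\,dx$, so that
\[
V_{\varphi_R}''(t)=-\int_{\R^N}\Delta^2\varphi_R\,|v|^2\,dx+4\int_{\R^N}\varphi_R''\,|\nabla v|^2\,dx-\frac{2(p-1)}{p+1}{\rm e}^{(1-p)\baa(t)}\int_{\R^N}\Delta\varphi_R\,|v|^{p+1}\,dx.
\]
I would then add and subtract $8\mathbf{H}(v(t))$. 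In the mass-critical case $p=1+\frac4N$ one has $N(p-1)=4$, and by \eqref{zeta}--\eqref{phi-prop} we have $\varphi_R''\equiv2$ and $\Delta\varphi_R\equiv2N$ on $\{|x|\le R\}$; hence the bulk parts of the gradient and the nonlinear integral recombine \emph{exactly} into $8\mathbf{H}(v(t))$ (using $\frac{4N(p-1)}{p+1}=\frac{16}{p+1}$), while the remainders concentrate on $\{|x|>R\}$ through the functions $\varphi_{1,R}=2-\varphi_R''$ and $\varphi_{2,R}=2N-\Delta\varphi_R$ from \eqref{psi-12-R}. This produces the clean decomposition
\[
V_{\varphi_R}''(t)=8\mathbf{H}(v(t))-\int_{\R^N}\Delta^2\varphi_R\,|v|^2\,dx-4\int_{|x|>R}\varphi_{1,R}|\nabla v|^2\,dx+\frac{2(p-1)}{p+1}{\rm e}^{(1-p)\baa(t)}\int_{|x|>R}\varphi_{2,R}|v|^{p+1}\,dx.
\]

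Next I would dispose of the biharmonic term. By the scaling $\varphi_R(x)=R^2\Theta(|x|/R)$ in \eqref{varphi-R}, the function $\Delta^2\varphi_R$ is supported in $\{R\le|x|\le2R\}$ and satisfies $|\Delta^2\varphi_R|\lesssim R^{-2}$, so mass conservation \eqref{mass-v} gives $\big|\int_{\R^N}\Delta^2\varphi_R\,|v|^2\,dx\big|\le CR^{-2}\mathbf{M}(u_0)\le CR^{-2}$. Moreover, since $\baa(t)\ge0$ and $p>1$ we have ${\rm e}^{(1-p)\baa(t)}\le1$, which lets me absorb the bounded prefactor $\tfrac{2(p-1)}{p+1}{\rm e}^{(1-p)\baa(t)}$ into a constant. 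It therefore only remains to control the exterior nonlinear term $\int_{|x|>R}\varphi_{2,R}\,|v|^{2+\frac4N}\,dx$.

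The heart of the matter, and the step I expect to be the main obstacle, is a weighted radial Gagliardo--Nirenberg estimate of the form
\[
\int_{|x|>R}\varphi_{2,R}\,|v|^{2+\frac4N}\,dx\le\varepsilon\int_{|x|>R}\varphi_{2,R}^{N/2}\,|\nabla v|^2\,dx+C\big(\varepsilon+\varepsilon^{-\frac1{N-1}}\big)R^{-2},
\]
valid for all $\varepsilon,R>0$ after rescaling $\varepsilon$ to clear the absorbed constant. To establish it I would exploit the radial Strauss decay $\|v\|_{L^\infty(|x|>R)}^2\lesssim R^{-(N-1)}\|v\|_{L^2(|x|>R)}\,\|\nabla v\|_{L^2(|x|>R)}$ together with the conserved mass $\mathbf{M}(u_0)$ to trade the $L^{p+1}$ mass on $\{|x|>R\}$ for a fractional power of the \emph{localized} Dirichlet energy; the carefully tuned profile of $\zeta$ on $1<r<2$ in \eqref{zeta} (precisely the regime the footnote flags as needed only in the $L^2$-critical case) is what guarantees the pointwise comparability between $\varphi_{1,R}$ and $\varphi_{2,R}^{N/2}$ required to reconstruct the weighted gradient on the right-hand side, while Young's inequality with parameter $\varepsilon$ generates the two error terms $C\varepsilon R^{-2}$ and $C\varepsilon^{-1/(N-1)}R^{-2}$, the exponent $\frac1{N-1}$ being dictated by the power $N-1$ in the radial decay. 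Substituting this estimate into the decomposition above, combining the $\varepsilon$-weighted gradient term with $-4\int_{|x|>R}\varphi_{1,R}|\nabla v|^2\,dx$, and collecting the $CR^{-2}$ from the biharmonic term yields exactly \eqref{viri-est-mass}.
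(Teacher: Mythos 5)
Your proposal is correct and follows essentially the same route as the paper, which itself defers the proof to \cite[Lemma 5.1]{DMS}: the exact decomposition of the radial virial identity around $8\mathbf{H}(v(t))$ (valid because $N(p-1)=4$), the bound on $\Delta^2\varphi_R$ combined with mass conservation, the observation ${\rm e}^{(1-p)\baa(t)}\le 1$, and the exterior radial Strauss--Young estimate with parameter $\varepsilon$ are precisely the ingredients named there. One small correction to your sketch: in the weighted Gagliardo--Nirenberg step, the quintic profile of $\zeta$ on $(1,2)$ is needed to guarantee $\big\| \nabla \big(\varphi_{2,R}^{N/4}\big) \big\|_{L^\infty} \lesssim R^{-1}$, so that applying the Strauss inequality to the radial function $\varphi_{2,R}^{N/4}v$ produces the weighted gradient $\int \varphi_{2,R}^{N/2}|\nabla v|^2\,dx$ with a controllable commutator term; the pointwise comparability of $4\varphi_{1,R}$ with $\varepsilon\varphi_{2,R}^{N/2}$, which you invoke here, plays no role inside the lemma and is only used in the remark that follows it, when the exterior gradient term is discarded.
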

\begin{remark}
{\rm The proof of Lemma \ref{lem-viri-est-mass} follows line by line the one of Lemma 5.1 in \cite{DMS} (with $b=0$). Note that the  term coming from the time-dependent damping is controlled by $1$, that is ${\rm e}^{(1-p)\baa(t)}\leq 1$, for all $t \ge 0$. As in \cite[Lemma 5.1]{DMS}, the estimates $\|\Delta^2\varphi_R\|_{L^\infty}\lesssim R^{-1}$ and $\Big\| \nabla \Big(\varphi_{2,R}^{\frac{N}{4}}\Big) \Big\|_{L^\infty} \lesssim R^{-1}$ together with the $\varepsilon-$Young inequality constitute the main tools to conclude Lemma \ref{lem-viri-est-mass}.}
\end{remark}

\begin{remark}
{\rm For $\varepsilon>0$ small enough and using the explicit form of $\zeta$ as in \eqref{zeta}, we can see that
$$
4\varphi_{1,R}(r)-\varepsilon\left(\varphi_{2,R}(r)\right)^{\frac{N}{2}}\geq 0, \quad \forall\; r>R>0,
$$
and consequently we infer that
\begin{equation} \label{viri-est-mass-bis}
		V_{\varphi_R}''(t) \leq 8\mathbf{E}(u_0)+\frac{32}{N+2}\int_0^t  \ba(s){\rm e}^{-\frac{4}{N}\baa(s)}\, \|v(s)\|_{L^{2+4/N}}^{2+4/N}\, ds+ C\varepsilon^{-\frac{1}{N-1}}R^{-2},
		\end{equation}
  where we have used \eqref{Hamil-v-id}.
}
\end{remark}

\section{Proofs of main results}
\label{S3}
In this section, we will give the proofs of our main results related to the blow-up  and the global existence of the solution to \eqref{main}. The  proofs of blow-up results will be carried out by contradiction, that is we assume that the solution of \eqref{main} exists globally in time. 

Aiming to simplify our presentation, we denote by ${\mathbf M}(t)={\mathbf M}(u(t))$, ${\mathbf E}(t)={\mathbf E}(u(t))$, ${\mathbf K}(t)={\mathbf K}(u(t))$, ${\mathbf V}(t)={\mathbf V}(u(t))$ and ${\mathbf P}(t)={\mathbf P}(u(t))$.
\begin{proof}[{Proof of Theorem \ref{Blow0}}]
The proof is similar to the one in \cite[Theorem 1.4]{FZS14} except for the use of (4.12)-(4.13) in \cite{FZS14}. With our notations, and using (4.12)-(4.13) in \cite{FZS14} together with the assumption ${\mathbf E}(0)<0$, we end up with the following inequality
\begin{equation}
\label{FZS-In}
0\leq g(t){\rm e}^{-2\kappa \baa(t)}\, {\mathbf K}(t)\leq {\mathbf K}(0)g(t)+4{\mathbf V}(0)\int_{0}^t\,g(s) ds, 
\end{equation}
where $\kappa$, $\baa(t)$, ${\mathbf K}(t)$ and  ${\mathbf V}(t)$ are given, respectively, by \eqref{kappa}, \eqref{A}, \eqref{Ku} and \eqref{Vu}, and
\begin{equation}
\label{g}
g(t):= {\rm e}^{2(1+\kappa) \baa(t)}.
\end{equation}
Keeping in mind that ${\mathbf V}(0)<0$ and owing to \eqref{FZS-In}, we get
\begin{equation}
\label{FZS-C1}
\int_{0}^t\,{\rm e}^{2(1+\kappa) (\baa(s)-\baa(t))}\,ds= \frac{1}{g(t)}\int_{0}^t\,g(s) ds\leq \frac{-{\mathbf K}(0)}{4{\mathbf V}(0)}.
\end{equation}
Since $\ba$ is non-decreasing, then by Lemma \ref{L-a} we have 
$$
\baa(s)-\baa(t)\geq -\overline{\ba}(t-s),\quad \mbox{for all}\;\;\; 0\leq s\leq t.
$$
This leads to 
\begin{equation}
\label{FZS-C2}
\frac{1-{\rm e}^{-2(1+\kappa) \overline{\ba}\,t}}{2(1+\kappa) \overline{\ba}}\leq \frac{-{\mathbf K}(0)}{4{\mathbf V}(0)}.
\end{equation}
From \eqref{FZS-C2} we easily deduce \eqref{a-star} and \eqref{Lspan}. This finishes the proof of Theorem \ref{Blow0}.
\end{proof}

\begin{proof}[{Proof of Theorem \ref{Blow1}}]
    Using Lemma \ref{lem-viri-iden-x2}, \eqref{Hamil-v} and \eqref{Hamil-v-id}, we infer that
    \begin{eqnarray}
        \mathbf{K}''(t) &\le& 8\mathbf{E}(u_0)+\frac{16(p-1)}{p+1}\int_0^t  \ba(s){\rm e}^{(1-p)\baa(s)}\, \|v(s)\|_{L^{p+1}}^{p+1}\, ds \nonumber\\ &&+ \frac{4}{p+1} \left(4-N(p-1)\right) {\rm e}^{(1-p)\baa(t)}\, \|v(t)\|_{L^{p+1}}^{p+1} \nonumber\\
        &{\le}& 8\mathbf{E}(u_0)+\frac{16(p-1)}{p+1}\int_0^t  \ba(s){\rm e}^{(1-p)\baa(s)}\, \|v(s)\|_{L^{p+1}}^{p+1}\, ds,
        \label{K-second}
    \end{eqnarray}
    where we have used the fact that $4-N(p-1) \le 0$.\\
 Integrating \eqref{K-second} twice in time, we obtain
 \begin{equation}\label{K-t}
     0 \le \mathbf{K}(t) \le P(t)+\frac{16(p-1)}{p+1}Q(t),
 \end{equation}
 where 
 \begin{eqnarray}
     P(t)&=& 4 \mathbf{E}(u_0) t^2+ 4 \mathbf{V}(u_0) t + \mathbf{K}(u_0), \label{P-t}\\
     Q(t)&=&\int_0^t \int_0^\sigma\int_0^\tau \ba(s){\rm e}^{(1-p)\baa(s)}\, \|v(s)\|_{L^{p+1}}^{p+1}\, ds d\tau d\sigma. \label{Q-t}
 \end{eqnarray}
 From the assumptions $(I)$--$(III)$ in Theorem \ref{Blow1}, there exists $t_0>0$ such that $P(t_0)<0$. \\
 By Sobolev embedding, we have
 \begin{equation*}
     \sup_{0 \le s \le t_0} \|v(s)\|_{L^{p+1}}^{p+1} \lesssim \sup_{0 \le s \le t_0} \|v(s)\|_{H^1}^{p+1}:=C(t_0).
 \end{equation*}
 Hence, we deduce that
 \begin{equation}\label{Q-t0-1}
     Q(t_0) \le \frac{C(t_0)}{p-1}\int_0^{t_0} \int_0^\sigma \left(1-{\rm e}^{(1-p)\baa(\tau)}\right)\,  d\tau d\sigma.
 \end{equation}
 Thanks to \eqref{ais} and the elementary inequality $1-{\rm e}^{-x}\le x,$ for all $x\geq 0$, we infer that
 \begin{equation}\label{Q-t0-2}
     Q(t_0) \le \frac{t_0^3}{6}\, C(t_0)\, \overline{\ba},
 \end{equation}
 where $\overline{\ba}$ is given by \eqref{ais}.\\
 Combining \eqref{Q-t0-2} with the fact that $P(t_0)<0$ and using \eqref{K-t}, we infer the existence of $\ba_*>0$ such that $\mathbf{K}(t_0)<0$ for all $\overline{\ba} < \ba_*$, which is a contradiction.

 This concludes the proof of Theorem \ref{Blow1}.
\end{proof}
Now, we give the proof of Theorem \ref{Blow2} which is related to the blow-up of solutions to \eqref{main} in the radial case.
\begin{proof}[{Proof of Theorem \ref{Blow2}}]
First, we start by the inter-critical regime, that is $1+\frac{4}{N}<p<1+\frac{4}{N-2}$. For that purpose, we use \eqref{viri-est-consq} and the fact that $4-N(p-1)<0$ to obtain 
\begin{equation*}
V''_{\varphi_R}(t) \leq \ 2N(p-1) {\mathbf E}(u_0)+ o_R(1)+ {\frac{4N(p-1)^2}{p+1}}Q''(t),
\end{equation*}
where $Q(t)$ is defined by \eqref{Q-t}. 

Integrating the above inequality twice in time, we get
\begin{equation}
\label{V-eq-inter}
0\leq V_{\varphi_R}(t) \leq a_R t^2+b_R t+ c_R+{\frac{4N(p-1)^2}{p+1}}Q(t),
\end{equation}
where 
\begin{eqnarray*}
a_R&=& N(p-1) {\mathbf E}(u_0)+ o_R(1),\\
b_R&=&2\ima\int_{\R^N} \overline{u_0}\left(\nabla \varphi_R \cdot \nabla u_0\right) dx,\\ 
c_R&=&\int_{\R^N}\, \varphi_R(x)|u_0(x)|^2 dx.
\end{eqnarray*}
From Lemma \ref{Blow-R} and the assumption ${\mathbf E}(u_0)<0$, we deduce the existence of $t_0>0$ and $R_0>0$ such that 
$$
a_{R_0} t_0^2+b_{R_0} t_0+ c_{R_0}<0. 
$$
Combining the above information with \eqref{Q-t0-2}, we obtain a contradiction for small values of $\overline{\ba}$. 

For the $L^2-$critical case, that is $p=1+\frac{4}{N}$, by using \eqref{viri-est-mass-bis} (instead of \eqref{viri-est-consq}), the proof follows similarly as in the inter-critical regime.
\end{proof}
{{Finally, we give the proof of Theorem \ref{GE}.	As we will see in the subsequent, our proof  borrows some arguments from \cite{OhTo}.
\begin{proof}[{Proof of Theorem \ref{GE}}]
Denote by $U_{\ba}(t)$ the free propagator associated with \eqref{main}. One can easily verify that
\begin{equation}
\label{U-a}
U_{\ba}(t)={\rm e}^{-\baa(t)}\,{\rm e}^{it\Delta},
\end{equation}
where $\baa(t)$ is given by \eqref{A}. It is then quite classical that the Cauchy problem for \eqref{main} with initial data $u(0)=u_0$ can be written in an integral form (see \cite{Cazenave}):
\begin{equation}
\label{Integ-Eq}
u(t)=U_{\ba}(t)u_0-i\mu \int_0^t\,U_{\ba}(t-\tau)|u(\tau)|^{p-1}u(\tau)\,d\tau.
\end{equation}
Since ${\rm e}^{-\baa(t)}\leq 1$ for all $t\geq 0$, Proposition 3 in \cite{OhTo} remains true, and we have the following statement.
\begin{proposition}
\label{Oh-To}
Let $N\geq 3$, $1+\frac{4}{N}\leq p<1+\frac{4}{N-2}$ and $u_0\in H^1(\R^N)$. Then, there exists $\varepsilon>0$ independent of $\ba$ and $u_0$ such that $T^*_{\ba}(u_0)=\infty$ provided that $\|U_{\ba}(\cdot)u_0\|_{L^\theta(0,\infty; L^{p+1})}\leq \varepsilon$, where $\theta$ is given by \eqref{theta}.
\end{proposition}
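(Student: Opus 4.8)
The plan is to adapt the standard small-data global existence argument for the inter-critical NLS (Cazenave--Weissler, and Ohta--Todorova \cite{OhTo}) via Strichartz estimates, the only genuinely new ingredient being the observation that the damped propagator $U_{\ba}$ satisfies the \emph{same} dispersive and Strichartz estimates as the free group $e^{it\Delta}$, uniformly in $\ba$. Indeed, from \eqref{U-a} and the causality of the damping, for $0\le \tau \le t$ one has $0\le e^{-(\baa(t)-\baa(\tau))}\le 1$, so both the dispersive bound $\|U_{\ba}(t-\tau)F\|_{L^\infty}\le C|t-\tau|^{-N/2}\|F\|_{L^1}$ and the energy bound $\|U_{\ba}(t)\phi\|_{L^2}\le\|\phi\|_{L^2}$ hold with the free constants. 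The exponent $\theta$ in \eqref{theta} is precisely the time exponent making $L^\theta_t L^{p+1}_x$ invariant under the scaling of the equation; equivalently $(\theta,p+1)$ is $\dot H^{s_c}$-admissible with $s_c=\tfrac{N}{2}-\tfrac{2}{p-1}\in[0,1)$, so this is the correct space in which to run a fixed-point scheme. I would work with $u$ and $U_{\ba}$ directly via \eqref{Integ-Eq} rather than passing to $v$ as in \eqref{main-bis}, since the hypothesis constrains $U_{\ba}(\cdot)u_0$ and \emph{not} $e^{i\cdot\Delta}u_0$.

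First I would fix a Strichartz-admissible pair $(q,r)$, i.e. $\tfrac{2}{q}+\tfrac{N}{r}=\tfrac{N}{2}$, and record by H\"older in space and time the nonlinear estimate
\[
\big\||u|^{p-1}u\big\|_{L^{q'}_tL^{r'}_x}\ \lesssim\ \|u\|_{L^\theta_tL^{p+1}_x}^{p-1}\,\|u\|_{L^q_tL^r_x},
\]
together with the companion bound controlling $\|u\|_{L^\theta L^{p+1}}$ by the same right-hand side; the scaling identity attached to $\theta$ guarantees that a compatible admissible pair exists throughout the inter-critical range. Applying the homogeneous and inhomogeneous Strichartz estimates for $U_{\ba}$ to the Duhamel map
\[
\Phi(u)(t)=U_{\ba}(t)u_0-i\mu\int_0^t U_{\ba}(t-\tau)|u|^{p-1}u(\tau)\,d\tau,
\]
yields $\|\Phi(u)\|_{X}\le \|U_{\ba}(\cdot)u_0\|_{L^\theta(0,\infty;L^{p+1})}+C\|u\|_X^{p}$ on the global space $X=L^\theta((0,\infty);L^{p+1})\cap L^q((0,\infty);L^r)$, with a matching difference estimate. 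A standard contraction on a small ball then produces a unique global-in-time solution with finite $X$-norm whenever $\varepsilon$ is small enough (depending only on the Strichartz constant, hence independent of $\ba$ and $u_0$).

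Finally I would upgrade the finiteness of the critical norm to $T^*_{\ba}(u_0)=\infty$. Since $u_0\in H^1$, the finiteness of $\|u\|_{L^\theta(0,\infty;L^{p+1})}$ allows one to partition $[0,T^*)$ into finitely many subintervals on each of which the nonlinear contribution is small in the relevant Strichartz norm; a persistence-of-regularity argument then bounds $\sup_{t<T^*}\|u(t)\|_{H^1}$, and the $H^1$ blow-up alternative forces $T^*=\infty$.

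I expect the main obstacle to be the \textbf{inhomogeneous Strichartz estimate for} $U_{\ba}$ with the non-admissible pair $(\theta,p+1)$ (the Kato--Cazenave--Weissler estimate), and in particular the justification that the causal multiplier $e^{-(\baa(t)-\baa(\tau))}$ may be discarded. The key point is that the Keel--Tao/Christ--Kiselev derivation uses only the pointwise dispersive and energy bounds for the propagator, both of which survive the insertion of a bounded causal kernel $0\le e^{-(\baa(t)-\baa(\tau))}\le 1$; this is exactly the content of the remark that ``$e^{-\baa(t)}\le 1$'' suffices, which lets one invoke Proposition~3 of \cite{OhTo} verbatim. Verifying this carefully is where the real work lies, the H\"older bookkeeping being routine.
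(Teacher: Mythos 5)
Your proposal follows essentially the same route as the paper: the paper's proof of Proposition \ref{Oh-To} consists precisely of the remark that $U_{\ba}$ obeys the free dispersive and $L^2$ bounds because ${\rm e}^{-\baa(t)}\le 1$, so that Proposition 3 of \cite{OhTo} remains valid, the details being the Duhamel representation \eqref{Integ-Eq}, Strichartz/Kato-type estimates in the scale-invariant space $L^\theta(0,\infty;L^{p+1})$ with $\theta$ as in \eqref{theta}, and a smallness argument (the paper closes it with the continuity Lemma \ref{boots}, following \cite{OhTo}, rather than your contraction on a small ball --- an immaterial difference). You are in fact more careful than the paper on one point: for non-constant $\ba$ the Duhamel kernel is the two-parameter propagator ${\rm e}^{-(\baa(t)-\baa(\tau))}{\rm e}^{{\rm i}(t-\tau)\Delta}$, not $U_{\ba}(t-\tau)$ as \eqref{Integ-Eq} literally says, and your causality observation $0\le {\rm e}^{-(\baa(t)-\baa(\tau))}\le 1$ for $\tau\le t$ is exactly the right substitute.

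One justification in your final paragraph would fail as stated and needs repair. Neither the Christ--Kiselev lemma nor the abstract Keel--Tao theorem tolerates the insertion of the causal multiplier $m(t,\tau)={\rm e}^{-(\baa(t)-\baa(\tau))}$. Christ--Kiselev requires boundedness of the \emph{untruncated} operator, but off the causal region one has $m(t,\tau)={\rm e}^{-\baa(t)}\,{\rm e}^{\baa(\tau)}$ with ${\rm e}^{\baa(\tau)}$ unbounded whenever $\baa(\tau)\to\infty$ (which is the regime of interest in Theorem \ref{GE}). Keel--Tao's $TT^*$ structure bounds $\int_{s<t}U(t)U(s)^*F(s)\,ds$, whereas the Duhamel operator here is $\int_{s<t}U_{\ba}(t)U_{\ba}(s)^{-1}F(s)\,ds$, and
\begin{equation*}
U_{\ba}(s)^{-1}={\rm e}^{\baa(s)}{\rm e}^{-{\rm i}s\Delta}\ \neq\ U_{\ba}(s)^*={\rm e}^{-\baa(s)}{\rm e}^{-{\rm i}s\Delta},
\end{equation*}
precisely because damping destroys unitarity. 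The correct (and sufficient) justification is the older Kato/Cazenave--Weissler derivation that you also name: the estimates actually needed --- the non-admissible pair $(\theta,p+1)$ and the diagonal admissible pair $\bigl(\tfrac{4(p+1)}{N(p-1)},\,p+1\bigr)$ --- are proven by applying the $L^{(p+1)'}\to L^{p+1}$ dispersive bound pointwise in $(t,\tau)$ and then the Hardy--Littlewood--Sobolev inequality in the time variable, an argument carried out with absolute values throughout, which therefore tolerates any kernel bounded by $1$ on $\{\tau<t\}$. This covers exactly the stated range, since the integrability requirement $N\bigl(\tfrac12-\tfrac1{p+1}\bigr)<1$ is equivalent to $p<1+\tfrac{4}{N-2}$. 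With that substitution, your scheme (contraction in $L^\theta L^{p+1}\cap L^qL^{p+1}$, then persistence of $H^1$ regularity and the blow-up alternative) is complete and matches the paper's intended proof.
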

The proof of Proposition \ref{Oh-To} follows the same lines as in \cite{OhTo} by using Strichartz estimates together with Lemma \ref{boots}. 

Now, to conclude the proof of the global existence it remains to show that we can make $\|U_{\ba}(\cdot)u_0\|_{L^\theta(0,\infty; L^{p+1})}$ small enough for large $\underline{\ba}$. To this end, we use a Sobolev embedding and \eqref{ais}, and we obtain
\begin{eqnarray}
\nonumber
\|U_{\ba}(\cdot)u_0\|_{L^\theta(0,\infty; L^{p+1})}^\theta&=&\int_0^\infty\,{\rm e}^{-\theta\baa(t)}\,\|{\rm e}^{it\Delta}u_0\|_{L^{p+1}}^{\theta}\,dt\\
\label{Glo-epsi}
&\lesssim& \|u_0\|_{H^{1}}^{\theta}\,\int_0^\infty\, {\rm e}^{-\theta {\underline{\ba}} t}\,dt\\
\nonumber
&\lesssim& \frac{\|u_0\|_{H^{1}}^{\theta}}{\theta \,{\underline{\ba}}}.
\end{eqnarray}
From \eqref{Glo-epsi} and Proposition \ref{Oh-To} we easily deduce that $T^*_{\ba}(u_0)=\infty$ if $\underline{\ba}\gtrsim \|u_0\|_{H^{1}}^{\theta}$. This finishes the proof of Theorem \ref{GE}.
\end{proof}
}}
\section{Concluding remarks}
\label{S5}
	
\begin{itemize}
    \item In the constant damping case, we know from \cite{OhTo} that there exist $a_*$ and $a^*$ such that $T_a^*(u_0) < \infty$ for $a < a_*$, and $T_a^*(u_0) = \infty$ for $a \ge a^*$. It will be interesting to investigate the sharpness of the upper bound of the blow-up region, namely $$\overline{a}_*:=\sup \left\{ a_*>0 \ \ \text{s.t.} \ T_a^*(u_0)< \infty \ \text{for all } \ 0<a< a_*\right\},$$  and the lower bound of the global existence region, that is $$\underline{a}^*:=\inf \left\{ a^*>0 \ \ \text{s.t.} \ T_a^*(u_0) = \infty \ \text{for all } \ a \ge a^*\right\}.$$
    Clearly, we have $0<\overline{a}_* \le \underline{a}^* < \infty$. 
    \item An interesting question is to see whether we have $\overline{a}_* = \underline{a}^*$? Otherwise, what happens in the threshold region $[\overline{a}_*, \underline{a}^*]$?
    \item We believe that, under some suitable conditions on the damping function $\ba(t)$, one can prove the global existence and the scattering in the energy-critical case for both focusing and defocusing regimes. This will be the subject of a forthcoming work; see \cite[Theorem 1.1]{VDD} for the case of a constant damping. 
    \item 
    In the mass-critical regime, that is $\displaystyle p=1+\frac{4}{N}$, and for the time-dependent damping, a curious question would be to show whether we can prove a similar result to \cite[Theorem 10]{OhTo}. This will be studied elsewhere.
\end{itemize}

\appendix
\section{}\label{appendix1}
\begin{lemma}
\label{L-a}
Let $\ba : [0,\infty)\to[0,\infty)$ be a continuous non-decreasing function. Then
\begin{equation*}
\label{L-aa}
\lim_{t\to\infty}\,\left(\frac{1}{t}\int_0^t\,\ba(s)ds\right)=\|\ba\|_{L^\infty}~(\in [0,\infty]).
\end{equation*}
In particular, we have $\overline{\ba}=\|\ba\|_{L^\infty}$, where $\overline{\ba}$ is given by \eqref{ais}.
\end{lemma}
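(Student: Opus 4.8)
The plan is to exploit monotonicity of $\ba$ to identify $\|\ba\|_{L^\infty}$ with the limit $L := \lim_{t\to\infty}\ba(t)=\sup_{t\ge 0}\ba(t)\in[0,\infty]$, and then to show that the running average $\baa(t)/t$ converges to the same value via a standard splitting argument. First I would record that, since $\ba$ is continuous and non-decreasing, $\|\ba\|_{L^\infty}$ coincides with this limit $L$.

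For the upper bound, monotonicity gives $\ba(s)\le L$ for every $s\ge 0$, whence $\frac{\baa(t)}{t}=\frac{1}{t}\int_0^t\ba(s)\,ds\le L$ for all $t>0$. For the lower bound I would distinguish two cases. If $L<\infty$, then for any $\varepsilon>0$ there is $T_0>0$ with $\ba(s)\ge L-\varepsilon$ for $s\ge T_0$; discarding the contribution on $[0,T_0]$ and using $\ba\ge 0$ yields, for $t>T_0$,
$$\frac{\baa(t)}{t}\ge \frac{1}{t}\int_{T_0}^t\ba(s)\,ds\ge (L-\varepsilon)\Big(1-\frac{T_0}{t}\Big),$$
so $\liminf_{t\to\infty}\baa(t)/t\ge L-\varepsilon$, and letting $\varepsilon\to 0$ gives $\liminf\ge L$. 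Combined with the upper bound this proves $\lim_{t\to\infty}\baa(t)/t=L$. If $L=\infty$, the same splitting with an arbitrary threshold $M$ in place of $L-\varepsilon$ shows $\liminf\ge M$ for every $M$, hence the limit is $+\infty$.

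For the final assertion $\overline{\ba}=\|\ba\|_{L^\infty}$, I would observe that the running average is itself non-decreasing: differentiating, $\frac{d}{dt}\big(\baa(t)/t\big)=\frac{1}{t}\big(\ba(t)-\baa(t)/t\big)\ge 0$, because $\baa(t)/t\le\ba(t)$ follows from $\ba(s)\le\ba(t)$ for $s\le t$. Therefore its supremum over $t>0$ equals its limit at infinity, i.e. $\overline{\ba}=L=\|\ba\|_{L^\infty}$; alternatively one simply combines the upper bound $\sup_{t>0}\baa(t)/t\le L$ with the fact, just established, that the limit equals $L$.

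The argument is entirely elementary, and I expect no genuine obstacle. The only point requiring care is the uniform treatment of the two cases $L<\infty$ and $L=\infty$ so that the full statement $\|\ba\|_{L^\infty}\in[0,\infty]$ is covered; the splitting argument handles both regimes with the single device of truncating the integral below a threshold where $\ba$ is already close to (respectively larger than any prescribed bound below) its supremum.
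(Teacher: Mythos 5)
Your proof is correct and follows essentially the same route as the paper: both discard the head of the integral and bound the tail from below using monotonicity, then combine with the trivial upper bound $\baa(t)/t\le\|\ba\|_{L^\infty}$. The only cosmetic difference is that the paper takes $\liminf_{t\to\infty}$ at a fixed splitting point $\tau$ to get $\liminf\ge\ba(\tau)$ and then lets $\tau\to\infty$, which handles the cases $\|\ba\|_{L^\infty}<\infty$ and $=\infty$ uniformly and avoids your $\varepsilon$/$M$ case distinction.
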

\begin{proof}
Let $t>\tau>0$. Since $\ba(t)$ is non-decreasing, we have
\begin{eqnarray*}
\frac{1}{t}\int_0^t\,\ba(s)ds&=&\frac{1}{t}\int_0^\tau\,\ba(s)ds+\frac{1}{t}\int_\tau^t\,\ba(s)ds\\
&\geq&\frac{1}{t}\int_0^\tau\,\ba(s)ds+\frac{t-\tau}{t}\ba(\tau).
\end{eqnarray*}
Taking the limit-inf as $t$ goes to $\infty$, we obtain
$$
\liminf\limits_{t\rightarrow\infty}\left(\frac{1}{t}\int_0^t\,\ba(s)ds\right)\geq \ba(\tau), \quad \mbox{for all}\;\; \tau>0.
$$
Again, using the monotony of $\ba(t)$, we obtain by taking the limit as $\tau$ goes to $\infty$,
$$
\liminf\limits_{t\rightarrow\infty}\left(\frac{1}{t}\int_0^t\,\ba(s)ds\right)\geq \|\ba\|_{L^\infty}.
$$
We conclude the proof by observing that 
$$
\limsup\limits_{t\rightarrow\infty}\left(\frac{1}{t}\int_0^t\,\ba(s)ds\right)\leq \|\ba\|_{L^\infty}.
$$
\end{proof}
\begin{lemma}
\label{L-ab}
Let $\ba : [0,\infty)\to[0,\infty)$ be a continuous non-increasing function. Then
\begin{equation*}
\label{L-aaa}
\sup_{t>0}\,\left(\frac{1}{t}\int_0^t\,\ba(s)ds\right)=\ba(0)=\|\ba\|_{L^\infty}.
\end{equation*}
\end{lemma}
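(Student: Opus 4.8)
The plan is to set $\Phi(t):=\frac{1}{t}\int_0^t \ba(s)\,ds$ for $t>0$ and to establish the two claimed equalities separately. The second equality $\ba(0)=\|\ba\|_{L^\infty}$ is immediate: since $\ba$ is non-increasing we have $\ba(s)\le \ba(0)$ for every $s\ge 0$, while the value $\ba(0)$ is itself attained, so $\ba(0)=\sup_{s\ge 0}\ba(s)=\|\ba\|_{L^\infty}$.

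For the first equality I would prove the two inequalities. The upper bound $\sup_{t>0}\Phi(t)\le \ba(0)$ follows directly from monotonicity: for each $t>0$ and each $s\in[0,t]$ one has $\ba(s)\le \ba(0)$, hence $\Phi(t)\le \frac{1}{t}\int_0^t \ba(0)\,ds=\ba(0)$. For the reverse inequality $\sup_{t>0}\Phi(t)\ge \ba(0)$ I would use the continuity of $\ba$ at the origin to show $\lim_{t\to 0^+}\Phi(t)=\ba(0)$. Given $\varepsilon>0$, continuity provides $\delta>0$ with $|\ba(s)-\ba(0)|<\varepsilon$ for $0\le s<\delta$, so that for $0<t<\delta$ we get $|\Phi(t)-\ba(0)|\le \frac{1}{t}\int_0^t |\ba(s)-\ba(0)|\,ds<\varepsilon$. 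Thus $\Phi(t)\to \ba(0)$ as $t\to 0^+$, which forces $\sup_{t>0}\Phi(t)\ge \ba(0)$; combining with the upper bound concludes the proof.

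Since the argument is elementary, there is no genuine obstacle; the only point requiring care is the lower bound, where one must exploit continuity of $\ba$ at $0$ rather than mere monotonicity. I note that one could alternatively observe that $\Phi$ is itself non-increasing: indeed $\Phi'(t)=\frac{1}{t}\big(\ba(t)-\Phi(t)\big)\le 0$, because $\ba(t)\le \ba(s)$ for $s\le t$ yields $\ba(t)\le \Phi(t)$. Consequently $\sup_{t>0}\Phi(t)=\lim_{t\to 0^+}\Phi(t)=\ba(0)$ directly, giving a one-line route to the same conclusion.
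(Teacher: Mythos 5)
Your proof is correct and follows essentially the same route as the paper: the upper bound $\frac{1}{t}\int_0^t \ba(s)\,ds \le \ba(0)$ from monotonicity, and the lower bound from continuity at the origin giving $\lim_{t\to 0^+}\frac{1}{t}\int_0^t \ba(s)\,ds = \ba(0)$, the only difference being that you spell out the $\varepsilon$--$\delta$ detail the paper leaves implicit. Your closing remark that $\Phi(t)=\frac{1}{t}\int_0^t \ba(s)\,ds$ is itself non-increasing is a valid and slightly slicker variant, but it rests on the same two ingredients and is not a genuinely different argument.
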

\begin{proof}
Since $\ba(t)$ is non-increasing and non-negative, then  $\ba(0)=\|\ba\|_{L^\infty}$. One can easily see that $\displaystyle\frac{1}{t}\int_0^t\,\ba(s)ds\leq \ba(0)$, for all $t>0$. Thanks to the continuity of $\ba(t)$, we infer that
$$
\lim_{t\to 0}\,\left(\frac{1}{t}\int_0^t\,\ba(s)ds\right)=\ba(0).
$$
This finishes the proof.
\end{proof}
\begin{lemma}
\label{Example}
Let $\alpha>0$. There exists a  continuous function $h_\alpha : [0,\infty)\to [0,\infty)$ such that
\begin{gather}
\label{Examp1}
h_\alpha\not\in L^\infty(0,\infty),\\
\label{Examp2}
\int_n^{n+1}\, |h_\alpha(t)|^q ~ dt=C_q\,n^{q-\alpha-1},\quad \forall\; n\geq 1,\;\;\;\forall\; q\in[1,\infty),
\end{gather}
where $C_q>0$ is a constant depending only on $q$.
\end{lemma}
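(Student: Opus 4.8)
The plan is to build $h_\alpha$ explicitly as a disjoint union of triangular spikes, one sitting inside each interval $[n,n+1]$, whose peak height and base width are tuned so that the two competing requirements — unboundedness on the one hand, and the prescribed $L^q$ growth on the other — hold simultaneously for \emph{every} exponent $q$.

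The computational heart of the argument is the elementary identity for a single tent. If $T_{H,w}$ denotes the tent function of peak height $H$ supported on an interval of length $w$ (rising linearly from $0$ up to $H$ and back down to $0$), then a single change of variables gives
$$\int T_{H,w}(t)^q\,dt = \frac{H^q\,w}{q+1}.$$
This identity is exactly what I would exploit: it \emph{factorizes} the height dependence ($H^q$) from the width dependence ($w$), which is precisely what allows one to match $n^{q-\alpha-1}=n^q\cdot n^{-\alpha-1}$ for all $q$ at once with a single function.

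Next I would choose the parameters. On $[n,n+1]$ (for $n\ge 1$) I place a tent centered at $n+\tfrac12$ of height $H_n=n$ and base width $w_n=n^{-\alpha-1}$, and set $h_\alpha\equiv 0$ on $[0,1]$. Since $\alpha>0$ and $n\ge 1$ we have $w_n\le 1$, so each tent fits inside its own unit interval and vanishes in a neighborhood of both integer endpoints; hence $h_\alpha$ is continuous on $[0,\infty)$, since at each integer it equals $0$ from both sides. Because the peaks attain the values $H_n=n\to\infty$, we obtain $h_\alpha\notin L^\infty(0,\infty)$, which is \eqref{Examp1}. Finally, the tent identity yields, for every $n\ge 1$ and every $q\in[1,\infty)$,
$$\int_n^{n+1}|h_\alpha(t)|^q\,dt=\frac{H_n^q\,w_n}{q+1}=\frac{n^q\,n^{-\alpha-1}}{q+1}=\frac{1}{q+1}\,n^{q-\alpha-1},$$
so \eqref{Examp2} holds with $C_q=\tfrac{1}{q+1}>0$, depending only on $q$.

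There is essentially no genuinely hard step here; the only thing to get right is the choice of the two exponents, and it is worth isolating why they are forced. Requiring $H_n^q\,w_n$ to be a ($q$-dependent) constant times $n^{q-\alpha-1}$ for \emph{every} $q$ forces the ansatz $H_n^q w_n=n^{aq+b}$ with $a=1$ and $b=-\alpha-1$, i.e.\ linearly growing heights and polynomially shrinking widths. The growing height is what produces the failure of boundedness, while the shrinking width compensates so that the $L^q$ mass deposited on $[n,n+1]$ stays exactly as small as prescribed. The only delicate point is that a single pair $(H_n,w_n)$ must work uniformly in $q$, and the factorized form of the tent integral makes this automatic.
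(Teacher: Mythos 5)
Your proof is correct and follows essentially the same construction as the paper: a triangular spike of height proportional to $n$ and base width proportional to $n^{-\alpha-1}$ placed in each interval $[n,n+1]$, with the tent integral identity $\int T_{H,w}^q = \frac{H^q w}{q+1}$ doing the work uniformly in $q$. The only differences are cosmetic normalizations (the paper anchors its spike at the left endpoint with height $4n$ and width $\tfrac12 n^{-\alpha-1}$, yielding $C_q = \frac{2^{2q-1}}{q+1}$ instead of your $C_q=\frac{1}{q+1}$).
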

\begin{proof}
For any integer $n\geq 1$, we define $h_\alpha$ on $[n, n+1]$ by
\begin{eqnarray*}
 h_\alpha(t)&=&\; \left\{
\begin{array}{cllll}16 n^{\alpha+2}\left(t-n\right)
\quad&\mbox{if}&\quad n\leq t\leq n+\frac{1}{4n^{\alpha+1}},\\\\ 16 n^{\alpha+2}\bigg(n+\frac{1}{2n^{\alpha+1}}-t\bigg) \quad
&\mbox{if}&\quad n+\frac{1}{4n^{\alpha+1}}\leq t\leq n+\frac{1}{2n^{\alpha+1}},\\\\
0 \quad
&\mbox{if}&\quad n+\frac{1}{2n^{\alpha+1}}\leq t\leq n+1.
\end{array}
\right.
\end{eqnarray*}
On $[0,1]$, we take $h_\alpha(t)=0$. One can easily verify that $h_{\alpha}$ satisfies \eqref{Examp1} and \eqref{Examp2} with $C_q=\frac{2^{2q-1}}{q+1}$.
\end{proof}
\begin{lemma}
\label{Explicit-Examp}
There exists a  continuous damping function $\ba : [0,\infty)\to [0,\infty)$ such that
\begin{gather*}
\label{Examp1-1}
\ba\not\in L^q(0,\infty),\quad \mbox{for all}~ ~\;1\leq q\leq \infty,\\
\label{Examp3-1}
\sup_{t>0}\bigg(\frac{1}{t}\int_0^t\,\ba(s)ds\bigg)<\infty.
\end{gather*}
\end{lemma}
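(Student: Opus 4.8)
The plan is to take the spike function $h_\alpha$ furnished by Lemma \ref{Example} and show that, for a suitable choice of $\alpha$, it already meets both requirements with $\ba := h_\alpha$. The heuristic is that $h_\alpha$ is a sequence of tall, thin bumps sitting on the intervals $[n,n+1]$: the bumps are \emph{tall} enough (their heights grow with $n$) to escape every $L^q$ space, yet \emph{thin} enough that their contribution to the running average $\baa(t)/t$ stays controlled. I expect the entire difficulty to be concentrated in choosing $\alpha$ so that these two competing demands hold simultaneously; this is exactly the tension between non-integrability (which wants $\ba$ large) and a bounded Ces\`aro-type average (which wants $\ba$ small on average).

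First I would fix any $\alpha \in (0,1)$ (for instance $\alpha=\tfrac12$) and set $\ba:=h_\alpha$. For the non-integrability I would sum the identity \eqref{Examp2} over $n\ge 1$ to get, for every $q\in[1,\infty)$,
\[
\int_0^\infty |\ba(t)|^q\,dt = C_q\sum_{n\ge 1} n^{q-\alpha-1}.
\]
Since $q\ge 1>\alpha$, the exponent satisfies $q-\alpha-1>-1$, so this is a divergent series; hence $\ba\notin L^q(0,\infty)$ for every finite $q$. The remaining endpoint $q=\infty$ is precisely \eqref{Examp1}, so $\ba\notin L^q(0,\infty)$ for all $1\le q\le\infty$.

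Next I would verify the bounded-average condition. Applying \eqref{Examp2} with $q=1$ and using $\ba\ge 0$ gives $\int_n^{n+1}\ba(s)\,ds=C_1 n^{-\alpha}$, and since $\ba$ vanishes on $[0,1]$, a summation yields, for $t\in[M,M+1]$ with $M\ge 1$,
\[
\baa(t)=\int_0^t \ba(s)\,ds \le C_1\sum_{n=1}^{M} n^{-\alpha}\le C\,t^{1-\alpha},
\]
for a constant $C=C(\alpha)$, by comparison with $\int_1^M x^{-\alpha}\,dx$. Dividing by $t$ gives $\baa(t)/t\le C\,t^{-\alpha}$, which vanishes on $0<t\le 1$ and is bounded by $C$ for $t\ge 1$; therefore $\overline{\ba}=\sup_{t>0}\baa(t)/t<\infty$.

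The only genuinely delicate point, and the one I would flag, is the restriction $\alpha<1$: it is what forces $q-\alpha-1>-1$ simultaneously for \emph{all} $q\ge 1$, ruling out membership in every $L^q$ at once, while keeping $1-\alpha>0$ so that $\baa(t)=O(t^{1-\alpha})$ and hence $\baa(t)/t\to 0$. For $\alpha\ge 1$ the series $\sum n^{q-\alpha-1}$ would converge for those $q<\alpha$, placing $\ba$ in some $L^q$; this is exactly why a single spike profile with $\alpha\in(0,1)$ suffices and no superposition over several exponents is needed.
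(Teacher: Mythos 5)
Your proof is correct and follows essentially the same route as the paper: both take $\ba:=h_\alpha$ from Lemma \ref{Example} and combine \eqref{Examp2} (to rule out every $L^q$) with a partial-sum estimate on $\int_n^{n+1}\ba$ (to bound the average). The only difference is the parameter: you take $\alpha\in(0,1)$, while the paper takes exactly $\alpha=1$.

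One side remark of yours is, however, wrong, and it is worth correcting precisely because it contradicts the paper's own choice. You claim that for $\alpha\ge 1$ the function $h_\alpha$ lands in some $L^q$, so that $\alpha<1$ is forced. This is false at the endpoint $\alpha=1$: the divergence criterion for $\sum_{n\ge1} n^{q-\alpha-1}$ is $q-\alpha-1\ge -1$, i.e.\ $q\ge\alpha$, so for $\alpha=1$ the series $\sum_n n^{q-2}$ still diverges for \emph{every} $q\in[1,\infty)$ (at $q=1$ it is the harmonic series), and the average stays bounded since $\baa(t)\lesssim \sum_{n\le t} n^{-1}\lesssim 1+\ln t=o(t)$. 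Thus the admissible range is $\alpha\in(0,1]$, not $(0,1)$, and the paper works exactly at this borderline value; only for $\alpha>1$ strictly does $\ba\in L^1$ (take $q=1<\alpha$) and the construction genuinely fail. This does not affect the validity of your proof with $\alpha=\tfrac12$, but the "delicate point" you flag is located one notch too early.
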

\begin{proof}
The proof of Lemma \ref{Explicit-Examp} easily follows from Lemma \ref{Example} by taking $\alpha=1$ for instance. Indeed, let $\ba(t)=h_1(t)$ be the function given in Lemma \ref{Example} with $\alpha=1$. For $t>1$ there exists a (unique) integer $N\geq 1$ such that $N\leq t<N+1$. Hence, using \eqref{Examp2}, we have
\begin{eqnarray*}
\int_0^t\,\ba(s) ds=\int_1^t\,\ba(s) ds &\leq& \int_1^{N+1}\,\ba(s) ds \le t.
\end{eqnarray*}
Since $\displaystyle\int_0^t\,\ba(s) ds=0$ for all $0\leq t\leq 1$, we deduce that
$$
\sup_{t>0}\bigg(\frac{1}{t}\int_0^t\,\ba(s)ds\bigg)\leq 1<\infty.
$$
Next, for $1\leq q<\infty$, we have
\begin{equation*}
\int_0^\infty\,|\ba(t)|^q\,dt = \sum_{n=1}^\infty\,\int_n^{n+1}\,|\ba(t)|^q\, dt =C_q\sum_{n=1}^\infty\,  n^{q-2}=\infty.
\end{equation*}
This ends the proof of Lemma \ref{Explicit-Examp}.
\end{proof}
The following lemma, of which the proof is elementary, was used to conclude the blow-up in the radial case.
\begin{lemma}
\label{Blow-R}
For $R>0$, let $(a_R, b_R, c_R)\in \R^2\times (0,\infty)$ such that 
$$
\lim_{R\to\infty}\,a_R=a\in\R.
$$
Suppose one of the following conditions:
\begin{itemize}
    \item $a<0$;
    \item $a=0$, $\displaystyle{\sup_{R\gg 1}}\,(b_R)<0$ and $ c_R\lesssim 1$;
    \item $a>0$ and $b_R+2 \sqrt{a_Rc_R}<0$ for $R$ large enough.
\end{itemize}
Then, there exist $t_0>0$ and $R_0>0$ such that 
$$a_{R_0} t_0^2+b_{R_0} t_0+c_{R_0}  <0.$$
\end{lemma}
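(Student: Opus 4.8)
The plan is to regard $P_R(t) := a_R t^2 + b_R t + c_R$ as a one-parameter family of real quadratics in the variable $t$ and, in each of the three cases, to produce a radius $R_0$ together with a point $t_0>0$ at which $P_{R_0}$ is already strictly negative. Once $R_0$ is fixed, the three coefficients $a_{R_0}, b_{R_0}, c_{R_0}$ are just real numbers (with $c_{R_0}>0$), so the argument reduces to elementary properties of quadratics; the only delicate point is choosing $t_0$ and $R_0$ so that the relevant hypotheses on $a_R, b_R, c_R$ are exploited simultaneously.

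In the case $a<0$, I would use $\lim_{R\to\infty} a_R = a < 0$ to fix $R_0$ large enough that $a_{R_0}<0$. Then $P_{R_0}$ has negative leading coefficient, hence $P_{R_0}(t)\to -\infty$ as $t\to\infty$, and any sufficiently large $t_0$ works; here no coupling between $R_0$ and $t_0$ is needed. In the case $a=0$, the hypotheses read $b_R\le -\delta_0$ for $R\ge R_1$ and $c_R\le C$ for some constants $\delta_0, C, R_1>0$, and the crucial feature is the order of the quantifiers: I would fix $t_0$ \emph{first}. For any $t_0$ one has $b_R t_0 + c_R \le -\delta_0 t_0 + C$, which is $\le -1$ as soon as $t_0 \ge (C+1)/\delta_0$; fixing such a $t_0$ and using $a_R\to 0$, the remaining term satisfies $a_R t_0^2 \to 0$, so for $R_0\ge R_1$ large one gets $a_{R_0} t_0^2 \le \tfrac12$ and therefore $P_{R_0}(t_0)\le -\tfrac12<0$.

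In the case $a>0$, we have $a_R>0$ for all large $R$, so $P_R$ is a genuine upward parabola and the natural candidate is its vertex $t_0 = -b_R/(2a_R)$. The assumption $b_R + 2\sqrt{a_R c_R}<0$ forces both $b_R<0$ (so that $t_0>0$) and $b_R^2 > 4 a_R c_R$ (a strictly positive discriminant); consequently the minimal value $P_{R_0}(t_0) = (4 a_{R_0} c_{R_0} - b_{R_0}^2)/(4 a_{R_0})$ is strictly negative for any large $R_0$, which finishes this case.

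I do not expect any real obstacle, as the statement is elementary. The single thing to watch is the borderline case $a=0$, where one must select $t_0$ before $R_0$ (rather than letting $t_0\to\infty$ for fixed $R_0$), so that the small-but-possibly-positive coefficient $a_R t_0^2$ is rendered harmless by the convergence $a_R\to 0$. I would present the three cases in the order $a<0$, $a=0$, $a>0$ above.
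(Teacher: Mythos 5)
Your proof is correct. The paper never writes out a proof of Lemma \ref{Blow-R} --- it only remarks that the proof is elementary --- and your three-case quadratic argument (negative leading coefficient for $a<0$, linear domination for $a=0$, vertex/discriminant for $a>0$) is exactly the intended argument; in particular, your observation that in the case $a=0$ one must fix $t_0$ \emph{before} $R_0$, so that the possibly positive term $a_R t_0^2$ is neutralized by $a_R \to 0$, addresses the only genuinely delicate point in the statement.
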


\section{}\label{appendix2}
We prove in this appendix the global existence in the energy space for  \eqref{main} when $p<1+\frac4N$. It is worth to mention that for the defocusing case ($\mu=1$), the global existence holds in $H^1(\R^N)$ for all $1<p < \frac{N+2}{N-2}$ when $N\ge 3$ and for all $p>1$ when $N=2$.
\begin{proposition}
    \label{prop-appendixB}
    Let $N\ge 2$, $1<p < 1+\frac4N$, and $\mu=-1$. Assume that $\ba \in L^1_{loc}(0,\infty)$ is a nonnegative function. Then, for any $u_0 \in H^1(\R^N)$, there exists a unique global solution $u \in C([0,\infty);H^1(\R^N))$ to \eqref{main} with $u(0,x)=u_0(x)$.
\end{proposition}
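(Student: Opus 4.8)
The plan is to pass to the gauge-transformed equation \eqref{main-bis}, for which the mass is exactly conserved by \eqref{mass-v}, and to establish an a priori bound on $\|v(t)\|_{H^1}$ on every bounded interval; since the global existence of $v$ is equivalent to that of $u$, and $\|u(t)\|_{H^1}\le \|v(t)\|_{H^1}$ because $u={\rm e}^{-\baa(t)}v$ with ${\rm e}^{-\baa(t)}\le 1$, this produces the desired global solution of \eqref{main}. The local well-posedness of \eqref{main-bis} in $H^1(\R^N)$ (valid since $t\mapsto {\rm e}^{(1-p)\baa(t)}$ is bounded, $\baa$ being continuous as $\ba\in L^1_{loc}$) furnishes a maximal solution together with the blow-up alternative: if the maximal time $T^*$ is finite, then $\|v(t)\|_{H^1}\to\infty$ as $t\to T^*$. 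Thus it suffices to rule out finite-time blow-up of $\|\nabla v(t)\|_{L^2}$.

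First I would control the nonlinear part of the Hamiltonian. The Gagliardo--Nirenberg inequality \eqref{GNI} together with the mass conservation \eqref{mass-v} gives
\begin{equation*}
\|v(t)\|_{L^{p+1}}^{p+1}\lesssim \mathbf{M}(u_0)^{\frac{p+1}{2}-\frac{N(p-1)}{4}}\,\|\nabla v(t)\|_{L^2}^{\frac{N(p-1)}{2}}.
\end{equation*}
The crucial point is that in the mass-subcritical regime $p<1+\frac{4}{N}$ one has $\frac{N(p-1)}{2}<2$, so Young's inequality absorbs this term into the gradient: using also ${\rm e}^{(1-p)\baa(t)}\le 1$, there is a constant $C>0$ depending only on $N$, $p$ and $\mathbf{M}(u_0)$ with
\begin{equation*}
\frac{2}{p+1}\,{\rm e}^{(1-p)\baa(t)}\|v(t)\|_{L^{p+1}}^{p+1}\le \tfrac12\|\nabla v(t)\|_{L^2}^2+C.
\end{equation*}
In view of the definition \eqref{Hamil-v} of $\mathbf{H}$, this yields $\tfrac12\|\nabla v(t)\|_{L^2}^2\le \mathbf{H}(v(t))+C$.

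Next I would bound the Hamiltonian itself, which is no longer conserved. From the identity \eqref{Hamil-v-id}, the estimate $\|v\|_{L^{p+1}}^{p+1}\le C(1+\|\nabla v\|_{L^2}^2)$ (again Gagliardo--Nirenberg and Young), and ${\rm e}^{(1-p)\baa(s)}\le 1$, I get
\begin{equation*}
\mathbf{H}(v(t))\le \mathbf{E}(u_0)+C\int_0^t \ba(s)\big(1+\|\nabla v(s)\|_{L^2}^2\big)\,ds.
\end{equation*}
Writing $y(t):=\|\nabla v(t)\|_{L^2}^2$ and combining with the previous display produces an integral inequality of Gronwall type,
\begin{equation*}
y(t)\le 2\mathbf{E}(u_0)+2C+2C\,\baa(t)+2C\int_0^t \ba(s)\,y(s)\,ds.
\end{equation*}
Since $\ba\in L^1_{loc}(0,\infty)$, the quantity $\baa(T)=\int_0^T\ba(s)\,ds$ is finite for every finite $T$, so Gronwall's lemma (for the measure $\ba(s)\,ds$) gives $y(t)\le \big(2\mathbf{E}(u_0)+2C+2C\baa(T)\big)\,{\rm e}^{2C\baa(T)}$ on $[0,T]$.

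Combined with mass conservation, this bounds $\|v(t)\|_{H^1}$ uniformly on every bounded interval; were $T^*$ finite, the bound would persist up to $T^*$ (as $\baa(T^*)<\infty$), contradicting the blow-up alternative, whence $T^*=\infty$. Uniqueness is inherited from the local theory by concatenation, and continuity in $H^1$ follows since $v\in C([0,\infty);H^1)$ and $\baa$ is continuous. The main obstacle, and the whole point of the argument, is precisely that the damping destroys the conservation of energy, so one cannot close the estimate by invoking a conserved quantity; the remedy is to quantify the growth of $\mathbf{H}(v)$ through \eqref{Hamil-v-id} and close a Gronwall estimate, which succeeds exactly because $\ba$ is only locally integrable and the nonlinearity is mass-subcritical.
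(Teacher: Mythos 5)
Your proof is correct and follows essentially the same route as the paper's: local well-posedness plus the blow-up alternative, Gagliardo--Nirenberg combined with Young's inequality exploiting $\frac{N(p-1)}{2}<2$ and the controlled mass, a growth estimate for the energy-type quantity, and Gr\"onwall's lemma together with $\baa(T)<\infty$ for finite $T$. The only difference is cosmetic: you work in the gauge $v={\rm e}^{\baa(t)}u$ with the Hamiltonian $\mathbf{H}(v)$ and the identity \eqref{Hamil-v-id}, whereas the paper argues directly on $u$ with the energy $\mathbf{E}(u)$ and the identity \eqref{E-Id}; since $\mathbf{H}(v(t))={\rm e}^{2\baa(t)}\mathbf{E}(u(t))$, the two formulations are equivalent.
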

\begin{proof}
    The local well-posedeness in $H^1(\R^N)$  of \eqref{main} is a straightforward use of the methods in \cite{Cazenave} for \eqref{main-bis} and the boundedness of $t \mapsto {\rm e}^{(1-p)\baa(t)}$ on $[0,\infty)$. Specifically, there exits $0<T_{max}\le\infty$ and a unique maximal solution $u \in C([0,T_{max});H^1(\R^N))$ to \eqref{main} with $u(0,x)=u_0(x)$. Furthermore,  if $T_{max}<\infty$, then $\displaystyle\lim_{t \to T_{max}} \|\nabla u(t)\|_{L^2}=\infty$.

    Now, we argue by contradiction by assuming that $T_{max}<\infty$. For that purpose, we first integrate \eqref{E-Id} on $[0,t)$, and using $\ba(t) \ge 0$, we infer that
    \begin{equation}
        \label{B-energy-est1}
        \mathbf{E}(t) \le \|\nabla u_0\|^2_{L^2} + 2 \int_0^t \ba(s) \left( \int_{\R^N} |u(s,x)|^{p+1} dx\right) ds.
    \end{equation}
    Employing the Gagliardo-Nirenberg inequality \eqref{GNI}, Young's inequality and the fact that $\frac{N(p-1)}{2} <2$, we deduce that
    \begin{equation}
        \label{B-energy-est2}
         \int_{\R^N} |u(s,x)|^{p+1} dx \leq \frac12 \|\nabla u(s)\|^2_{L^2} + C_0,
    \end{equation}
    where $C_0=C_0(\|u_0\|_{L^2},p,N)$ is a positive constant.\\
    From \eqref{B-energy-est2} and \eqref{Ener}, we obtain
    \begin{equation}
        \label{B-energy-est3}
        \mathbf{E}(t) \ge \frac12 \|\nabla u(s)\|^2_{L^2} - C_0.
    \end{equation}
    Combining \eqref{B-energy-est1}--\eqref{B-energy-est3}, we conclude that
    \begin{equation}
        \label{B-energy-est4}
        \|\nabla u(t)\|^2_{L^2} \le 2 \|\nabla u_0\|^2_{L^2} + 2 C_0 +4 C_0 \baa(t)+ 2\int_0^t \ba(s) \|\nabla u(s)\|^2_{L^2} ds,    \end{equation}
where $\baa(t)$ is given by \eqref{A}.
    Thanks to Gr\"onwall's inequality, the estimate \eqref{B-energy-est4} yields
    \begin{equation}
        \label{B-energy-est5}
        \|\nabla u(t)\|^2_{L^2} \le \big(2 \|\nabla u_0\|^2_{L^2} + 2 C_0 +4 C_0 \baa(t)\big) {\rm e}^{2 \baa(t)}, \quad \forall \ 0 \le t < T_{max}.
    \end{equation}
    Since $\ba \in L^1_{loc}(0,\infty)$ and $T_{max} < \infty$, it follows from \eqref{B-energy-est5} that 
    $$\limsup_{t \to T_{max}}\|\nabla u(t)\|_{L^2}  < \infty.$$
    This ends the proof of Proposition \ref{prop-appendixB}.
\end{proof}

\section*{Acknowledgments}
The authors would like to thank the reviewer for the valuable comments and suggestions which improved the manuscript.









\begin{thebibliography}{99}



\bibitem{AT} L. Aloui and S. Tayachi, Local well-posedness for the inhomogeneous nonlinear Schr\"odinger equation, {\em Discrete Cont. Dyn. Syst.,} {\bf 41} (2021), 5409--5437.

\bibitem{Antonelli2} P. Antonelli, R. Carles and C. Sparber, On nonlinear Schr\"odinger-type equations with nonlinear damping, {\em Int. Math. Res. Not.,}  {\bf 3} (2015), 740--762.

\bibitem{Cazenave} {T. Cazenave}, Semilinear Schr\"odinger equations, {\em Courant Lecture Notes in Mathematics,} {\bf 10}. New York University, Courant Institute of Mathematical Sciences, AMS, 2003.

\bibitem{CPKP06} M. Centurion, M. A. Porter, P. G. Kevrekidis and D. Psaltis, Nonlinearity management in optics: experiment, theory, and simulation, {\em Phys. Rev. Lett.,} {\bf 97} (2006) 033903.

\bibitem{CZW} G. Chen, J. Zhang and Y. Wei, A small initial data criterion of global existence for the damped nonlinear Schr\"odinger equation, {\em J. Phys. A: Math. Theor.,} {\bf 42} (2009).

\bibitem{CIMM} J. Colliander, S. Ibrahim, M. Majdoub and N. Masmoudi, Energy critical NLS in two space dimensions, {\em J. Hyperbolic Differ. Equ.,} {\bf 6} (2009), 549--575.

\bibitem{Dar} M. Darwich, Blow-up for the damped $L^2-$critical nonlinear Schr\"odinger equation, {\em Adv. Differential Equations,} {\bf 17} (2012), 337--367.

\bibitem{Dar2} M. Darwich, {\em On the Cauchy problem for the nonlinear Schr\"odinger equations including fractional dissipation with variable coefficient,} 
 Math. Meth. Appl. Sci., {\bf 41} (2018), 2930--2938.

\bibitem{VDD} {V. D. Dinh}, Blow-up criteria for linearly damped nonlinear Schr\"odinger equations, {\em Evol. Equ. Control Theory,} {\bf 10} (2021), 599--617. 



\bibitem{DMS} V. D. Dinh, M. Majdoub and T. Saanouni, Long time dynamics and blow-up for the focusing inhomogeneous nonlinear Schr\"odinger equation with spatial growing nonlinearity, {\em J. Math. Phys.}, {\bf 64} (2023), 081509.

\bibitem{FZS14}{B. Feng, D. Zhao and C. Sun}, On the {Cauchy} problem for the nonlinear {Schr{\"o}dinger} equations with time-dependent linear loss/gain, {\em J. Math. Anal. Appl.}, {\bf 416} (2014), {901--923}.

\bibitem{Fib} {G. Fibich}, Self-focusing in the damped nonlinear Schr\"odinger equation, {\em SIAM J. Appl. Math.,} {\bf 61} (2001), 1680--1705.



  
 \bibitem{HH-MJM2022}{M. Hamouda and M.-A Hamza}, Improvement on the blow-up for the weakly coupled wave equations with scale-invariant damping and time derivative nonlinearity, {\em Mediterr. J. Math.}, {\bf 19} (2022), Pages {17}, {Id/No 136}.

 
\bibitem{In} T. Inui, Asymptotic behavior of the nonlinear damped Schr\"odinger equation, {\em Proc. Amer. Math. Soc.,} {\bf 147} (2019), 763--773. 

\bibitem{Kato} T. Kato, On nonlinear Schr\"odinger equations, {\em  Ann. Inst. H. Poincar\'e Phys. Th\'eor.,} {\bf 46} (1987), 113--129.


\bibitem{LST-2020} {N.-A. Lai, N. M. Schiavone and H. Takamura}, Heat-like and wave-like lifespan estimates for solutions of semilinear damped wave equations via a Kato's type lemma, {\em  J. Differential Equations,} Vol. {\bf 269}, (2020),  11575--11620. 

 \bibitem{Malomed} B. A. Malomed, Soliton Management in Periodic Systems, {\em Springer,} NewYork, 2006.

\bibitem{Merle} F. Merle and P. Rapha\"el, The blow-up dynamic and upper bound on the blow-up rate for critical nonlinear Schr\"odinger equation, {\em Ann. of Math.,} {\bf 161} (2005), 157--222.

\bibitem{Merle1} F. Merle and P. Rapha\"el, On a sharp lower bound on the blow-up rate for the $L^2-$critical nonlinear Schr\"odinger equation, {\em J. Amer. Math. Soc.,} {\bf 19} (2006), 37--90.
	
\bibitem{OT-1D} {T. Ogawa and Y. Tsutsumi}, Blow-up of $H^1-$solutions for the one dimensional nonlinear Schr\"odinger equation with critical power nonlinearity, {\em Proc. Amer. Math. Soc.,} {\bf 111} (1991), 487--496.

\bibitem{OhTo} {M. Ohta and G. Todorova}, Remarks on global existence and blow-up for damped nonlinear Schr\"odinger equations, {\em Discrete \& Continuous Dynamical Systems-A,} {\bf 23} (2009), 1313--1325.

\bibitem{Tarek} T. Saanouni, Remarks on the damped nonlinear Schr\"odinger equation, {\em Evol. Equ. Control Theory,} {\bf 9} (2020),  721--732.


\bibitem{Strauss} W. A. Strauss, Decay and asymptotics for {{\(\square u = F(u)\)}}, {\em J. Funct. Anal.}, {\bf 2} (1968), 409--457.


\bibitem{Tao} {T. Tao}, Nonlinear Dispersive Equations: Local and Global Analysis, {\em CBMS, Regional Conference Series in Mathematics} {\bf 106}, American Mathematical Society (2006).

\bibitem{Tao07}  T. Tao, M. Visan and X. Zhang, The nonlinear Schr\"odinger equation with combined power-type nonlinearities, {\em Comm. Partial Differential Equations,} {\bf 32} (2007), 1281--1343.

\bibitem{Tsut1} {M. Tsutsumi}, Nonexistence of global solutions to the Cauchy problem for the damped non-linear Schr\"odinger equations, {\em SIAM J. Math. Anal.,} {\bf 15} (1984), 357--366.


\end{thebibliography}
\end{document}